\theoremstyle{plain}
\newtheorem{theorem}{Theorem}
\newtheorem{lemma}[theorem]{Lemma}
\newtheorem{corollary}[theorem]{Corollary}
\theoremstyle{definition}
\newtheorem{definition}[theorem]{Definition}
\numberwithin{equation}{section}
\newcommand{\im}[1]{\mathbf{ i_{#1}}}
\newcommand{\ims}[1]{\mathbf{ i_{#1}^2}}
\newcommand{\jm}[1]{\mathbf{ j_{#1}}}
\newcommand{\jms}[1]{\mathbf{ j_{#1}^2}}
\newcommand{\be}[1]{\mathbf{e_{#1}}}
\newcommand{\bes}[1]{\mathbf{e_{#1}^2}}
\newcommand{\mB}{\mathbb{B}}
\newcommand{\mC}{\mathbb{C}}
\newcommand{\mD}{\mathbb{D}}
\newcommand{\mR}{\mathbb{R}}
\newcommand{\ra}{\rightarrow}
\renewcommand{\Re}{\operatorname{Re}}
\renewcommand{\Im}{\operatorname{Im}}
\begin{document}

%\articletype{ARTICLE TEMPLATE}% Specify the article type or omit as appropriate

\title{Involutions of Bicomplex Numbers}
\author{Pierre-Olivier Paris\'e}
\thanks{\textit{Corresponding Author Email.} parisepo@hawaii.edu}
\address{Pierre-Olivier Parisé: Department of Mathematics, University of Hawai'i at Manoa,
Honolulu, Hawai'i,  United-States, 96822}
\email{parisepo@math.hawaii.edu}

\begin{abstract}
An involution of a real commutative algebra $A$ is a real-linear homomorphism $f : A \rightarrow A$ such that $f^2 = \mathrm{Id}$. We show that there are six involutions of the algebra of bicomplex numbers, contrary to the actual number of four stated in the literature. We also characterize $n$-involutions satisfying the additional property $f^n = \mathrm{Id}$ for some integer $n \geq 2$. We show there are eight $n$-involutions and they occur only for $n = 2$ and $n= 4$. We use our result to give a new characterization  of the invertible elements of the algebra of bicomplex numbers.  
\end{abstract}

\subjclass[2020]{Primary: 11E88, 13A18; Secondary: 16S50, 16W20.}

\keywords{Bicomplex numbers, complex numbers, quaternions, involutions.}

\maketitle
Word Counting: 3,994.

\section{Introduction}
Let $\mC$ be the set of complex numbers together with its usual addition and multiplication making it a commutative field. It is an easy exercise to show that the only real-linear homomorphisms $f : \mC \ra \mC$ such that $f (f(z)) = z$ for any complex number $z$ are $f(z) = z$ and $f(z) = \overline{z}$, the complex conjugate. The functions satisfying these properties (real-linear homomorphism and $f\circ f = \mathrm{Id}$) will be called \textit{involutions}. In fact, for an algebra $A$ over a commutative ring $R$, a function $f : A \ra A$ is said to be an involution if $f^2 (a) = a$, that is $f$ is its own inverse, and $f$ is a real-linear homomorphism of $A$. We may ask therefore if there are other characterizations of this type for other algebras of numbers.

In \cite{SangwineTodd2007}, the authors were interested in the (anti)involutions of the algebra of quaternions and its connections to the description of rotations and reflections in $\mR^3$. 
The quaternion algebra were introduced by Hamilton in the 19th century. They are defined as the set $\mathbb{H}$ of expressions $q = a + b\im{} + c \jm{} + d \mathbf{k}$ where $\ims{} = \jms{} = \mathbf{k^2} = -1$ and $a, b, c, d \in \mR$, together with an addition defined component wise and a multiplication of the imaginary units $\im{}$, $\jm{}$, and $\mathbf{k}$ defined by the following rules: 
	$$
	\im{} \jm{} = -\jm{} \im{} = \mathbf{k}, \quad \jm{} \mathbf{k} = -\mathbf{k} \jm{} = \im{}, \quad \mathbf{k} \im{} = -\im{} \mathbf{k} = \jm{}.
	$$ 
With these operations, the quaternions become a noncommutative field (or a noncommutative algebra over $\mR$). A pure quaternion is a quaternion ${\boldsymbol\mu} = a \im{} + b \jm{} + c \mathbf{k}$. The authors showed that the functions of the form $q \mapsto - {\boldsymbol\mu} q {\boldsymbol\mu}$ for a given pure quaternion ${\boldsymbol\mu} = a \im{} + b \jm{} + c \mathbf{k}$, with $a^2 + b^2 + c^2 = 1$, are involutions of the quaternions $\mathbb{H}$. In \cite{Lawon2021}, the authors then show that this is in fact the only involutions of the quaternions, completing the characterization of the involutions of the quaternions. The  results of the authors of \cite{SangwineTodd2007} were also generalized to dual quaternions and dual split quaternions in \cite{Bek2013, Bek2016b}.

In this paper, we are interested in the same problem of characterizing involutions but in the algebra of bicomplex numbers, denoted by $\mB \mC$. The precise definition of these numbers is presented in Section \ref{sec:AlgBC}. The main characteristic of the bicomplex numbers is that, endowed with an addition and a multiplication, they become a commutative ring with zero divisors. The presence of zero divisors makes the algebra of bicomplex numbers rather different from the algebra of quaternions. 

According to the literature on the bicomplex numbers, for example \cite{Bicomplex, RochonShapiro} where the involution were first mentionned, only three possible non-trivial involutions on the set of bicomplex numbers are presented: if $s = z_1 + z_2\im{2}$ is a bicomplex number, where $z_1, z_2$ are complex numbers, then
	\begin{align*}
	s^{\star} = \overline{z}_1 + \overline{z}_2 \im{2} , \quad s^{\ast} = z_1 - z_2 \im{2} , \quad s^{\dagger} = \overline{z}_1 - \overline{z}_2 \im{2} .
	\end{align*}	
To the best of our knowledge, no attempt was made to show that these are in fact the only involutions of the bicomplex numbers. We however show that there are more involutions than expected. Our main result is the following.
	\begin{theorem}\label{T:MainOne}
	Let $s = z_1 + z_2 \im{2}$ be a bicomplex number. Then there are six possible involutions:
		\begin{enumerate}
		\item $(z_1 + z_2 \im{2})^{\dagger_0} = z_1 + z_2 \im{2}$.
		\item $(z_1 + z_2 \im{2})^{\dagger_1} = z_1 - z_2 \im{2}$.
		\item $(z_1 + z_2 \im{2})^{\dagger_2} = \overline{z}_1 + \overline{z}_2 \im{2}$;
		\item $(z_1 + z_2 \im{2})^{\dagger_3} = \overline{z}_1 - \overline{z}_2 \im{2}$;
		\item $(z_1 + z_2 \im{2})^{\dagger_4} = (\Re (z_1) - \Re (z_2) \im{1}) + (-\Im (z_1) + \Im (z_2) \im{1}) \im{2}$;
		\item $(z_1 + z_2 \im{2})^{\dagger_5} = (\Re (z_1) + \Re (z_2) \im{1}) + (\Im (z_1 ) + \Im (z_2) \im{1}) \im{2}$.
		\end{enumerate}
	\end{theorem}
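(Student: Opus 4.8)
The plan is to exploit that an involution, being in particular a surjective algebra homomorphism, is rigidly determined by very little data. First I would record that $f(1) = 1$: since $f^2 = \mathrm{Id}$, the map $f$ is a bijection, and a bijective algebra endomorphism carries the unit to the unit; equivalently, $f(1) = f(1)^2$ forces $f(1)$ to be idempotent, and surjectivity excludes every idempotent of $\mB\mC$ except $1$. Combined with real-linearity this shows $f$ fixes $\mR$ pointwise, so $f$ is completely determined by the two values $u := f(\im{1})$ and $v := f(\im{2})$. Applying $f$ to $\im{1}^2 = \im{2}^2 = -1$ gives $u^2 = v^2 = -1$, so $u$ and $v$ must be square roots of $-1$ in $\mB\mC$.

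The key step is therefore to list all square roots of $-1$. For this I would pass to the idempotent representation: writing $\be{1} = \tfrac{1}{2}(1 + \im{1}\im{2})$ and $\be{2} = \tfrac{1}{2}(1 - \im{1}\im{2})$, one has the ring isomorphism $\mB\mC \cong \mC \times \mC$ under which multiplication is componentwise. Since $-1$ has exactly the two square roots $\pm \im{1}$ in each copy of $\mC$, there are exactly four square roots of $-1$ in $\mB\mC$, and translating back through the idempotents identifies them as $\{\,\im{1},\,-\im{1},\,\im{2},\,-\im{2}\,\}$. I would also note that, because $\mB\mC \cong \mR[x,y]/(x^2+1,\,y^2+1)$, every assignment $\im{1}\mapsto u$, $\im{2}\mapsto v$ with $u^2 = v^2 = -1$ does extend to a genuine algebra endomorphism; hence there are a priori sixteen candidate homomorphisms, one for each ordered pair $(u,v)$ of square roots of $-1$.

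It remains to impose $f^2 = \mathrm{Id}$, which is equivalent to the two conditions $f(u) = \im{1}$ and $f(v) = \im{2}$. Here is where the bookkeeping lives, and it is the part to carry out carefully. When $u = \pm\im{1}$ the condition $f(u) = \im{1}$ holds automatically, since $f(u) = \pm u$ with the matching sign; symmetrically $f(v) = \im{2}$ whenever $v = \pm\im{2}$. This produces four involutions obtained by choosing $u \in \{\pm\im{1}\}$ and $v \in \{\pm\im{2}\}$ independently. When instead $u = \pm\im{2}$, the condition $f(u) = \im{1}$ forces a matching value of $v$, and the companion condition on $v$ forces a matching value of $u$; solving these sign constraints leaves exactly the two ``swap'' pairs $(u,v) = (\im{2},\im{1})$ and $(u,v) = (-\im{2},-\im{1})$, while the two mismatched swaps are excluded. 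Thus precisely six of the sixteen homomorphisms are involutions.

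Finally I would translate each surviving pair into the explicit formula on $s = z_1 + z_2\im{2}$ and match it against the list. The four pairs with $u = \pm\im{1}$, $v = \pm\im{2}$ give $\dagger_0,\dots,\dagger_3$, since flipping the sign of $\im{1}$ conjugates both complex coefficients and flipping the sign of $\im{2}$ negates $z_2$. Writing $z_1 = \Re(z_1) + \Im(z_1)\im{1}$ and $z_2 = \Re(z_2) + \Im(z_2)\im{1}$, the pair $(\im{2},\im{1})$ interchanges the $\im{1}$- and $\im{2}$-coefficients and yields $\dagger_5$, while $(-\im{2},-\im{1})$ yields $\dagger_4$. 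I expect the only genuine obstacle to be the completeness of the square-root count together with the sign bookkeeping in the swap case; everything else is forced by the homomorphism property.
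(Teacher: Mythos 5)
Your proof is correct and follows essentially the same route as the paper: reduce to the values $f(\im{1})$, $f(\im{2})$, classify the square roots of $-1$ via the idempotent decomposition $\mB\mC \cong \mC \times \mC$, and then do the sign bookkeeping to see which assignments satisfy $f^2 = \mathrm{Id}$. The only packaging difference is that you enumerate all sixteen homomorphisms (justifying their existence via the presentation $\mR[x,y]/(x^2+1,y^2+1)$, a point the paper leaves implicit) and let the condition $f^2 = \mathrm{Id}$ do the cutting, whereas the paper first uses bijectivity to reduce to eight candidates before imposing that condition.
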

The involutions $\dagger_1$, $\dagger_2$, and $\dagger_3$ corresponds to the known involutions $\ast$, $\star$, and $\dagger$ respectively. The last two were unknown yet in the literature. We also characterize all real-linear homomorphisms $f : \mB \mC \ra \mB \mC$ such that $f^n = \mathrm{Id}$ where $\mathrm{Id}$ is the identity mapping and $n \geq 2$ is a positive integer. These functions are called $n$-involutions and our main result shows there are eight $n$-involutions and they only occur for $n = 2$ and $n = 4$. Our main results give another important difference between the algebra of bicomplex numbers and the algebra of quaternions: in the former, there are finitely many involutions and, in the latter, there are infinitely many involutions! The proofs of our main results are presented in Section \ref{sec:CharInv}. 

In Section \ref{sec:GroupStructure}, we give a description of the group structures of the $n$-involutions. We use the $n$-involutions to give a new characterization of invertible bicomplex numbers. Our description was based on a result stated in \cite{RochonValiere}. Finally, in the last section, Section \ref{sec:GeomInterpretation}, we provide some geometric interpretations of the $n$-involutions of bicomplex numbers.

\section{Algebra of Bicomplex numbers}\label{sec:AlgBC}
The bicomplex numbers, or more generally, the multicomplex numbers, were introduced by Segre \cite{Segre} and Cockle \cite{cockle1, cockle2, cockle3, cockle4} to give another generalization of the complex numbers apart from the quaternions. For a modern treatment of the bicomplex numbers, we refer the reader to \cite{Bicomplex} or \cite{Baley}. Here, we follow the presentation from \cite{Baley}.

A bicomplex number is defined by a duplication process of the complex numbers. To fix the notation, we will use the imaginary unit $\im{1}$ such that $\ims{1} = -1$ and produce the set
	\begin{align*}
	\mC (\im{1}) := \{ x + y \im{1} \, : \, x, y \in \mR \}
	\end{align*}
of complex numbers. The set of bicomplex numbers is then defined in the following way.

	\begin{definition}
	The set of bicomplex numbers, denoted by $\mB \mC$, is the set
		\begin{align*}
		\mB \mC := \{ z_1 + z_2 \im{2} \, : \, z_1 , z_2 \in \mC (\im{1}) \}
		\end{align*}
	with $\ims{2} = -1$ and $\im{2} \neq \im{1}$.
	\end{definition}
	\noindent The set of bicomplex numbers can also be viewed as a two-dimensional complex Clifford algebra (see \cite{RochonShapiro}).

If we endow the set $\mB \mC$ with the following operations: if $s = z_1 + z_2 \im{2}$ and $t = w_1 + w_2 \im{2}$, then
	\begin{itemize}
	\item $s + t = (z_1 + w_1) + (z_2 + w_2) \im{2}$;
	\item $s \cdot t = (z_1 w_1 - z_2 w_2) + (z_1 w_2 + z_2 w_1) \im{2}$.
	\end{itemize}
then the triplet $(\mB \mC , + , \cdot )$ is a commutative ring with zero divisors. Two divisors of zero play an important role in the theory of bicomplex numbers. We shall talk about this fact later in this section.

	If $s$ is a bicomplex number, then by writing down explicitly the real and imaginary parts of the components of $s$, we then obtain the following representation in terms of real components:
	\begin{align}
	s & = x_1 + x_{\im{1}} \im{1} + x_{\im{2}} \im{2} + x_{\jm{1}} \jm{1} 
	\end{align}
where $\jm{1} = \im{1}\im{2}$ is called a hyperbolic unit because $\jms{1} = 1$. Using a hyperbolic unit, we can define the set of hyperbolic numbers:
	\begin{align*}
	\mD (\jm{1} ) := \{ x + y \jm{1} \, : \, x , y \in \mR \} .
	\end{align*}	
Equipped with the bicomplex multiplication, the set of hyperbolic numbers becomes a commutative subring of the set of bicomplex numbers. The hyperbolic numbers have a strong connection with the hyperbolic cosine and hyperbolic sine functions similar to the connection the complex numbers have with the cosine and sine functions. For more details on this subject, the reader is referred to Sobczyk's article \cite{Sobczyk}.

We mentioned that the bicomplex numbers form a commutative ring with non invertible elements. This is one of the major differences with the quaternions. Fortunately, in the case of the bicomplex numbers, we can completely characterize the non-invertible elements. To present the characterization, we introduce the following bicomplex numbers:
	\begin{align*}
	\be{1} := \frac{1 + \jm{1}}{2}  \quad \text{and} \quad \be{2} := \frac{1 - \jm{1}}{2} .
	\end{align*}
These two bicomplex numbers are called idempotent elements because of the following properties:
	\begin{align*}
	\bes{1} = \be{1} \quad \text{ and } \quad \bes{2} = \be{2}. 
	\end{align*}
They also satisfy the following additionnal properties
	\begin{align*}
	\be{1} + \be{2} = 1 \quad \text{ and } \quad \be{1} \be{2} = 0 .
	\end{align*}
Multiplying a bicomplex number $s$ respectively by $\be{1}$ and $\be{2}$, we obtain
	\begin{align*}
	s \be{1} = (z_1 - z_2 \im{1}) \be{1} \quad \text{ and } \quad s \be{2} = (z_1 + z_2 \im{1}) \be{2} .
	\end{align*}
Adding these last two identities and using the property $\be{1} + \be{2} = 1$, we then see that a bicomplex number $s$ can be rewritten in the following form
	\begin{align}
	s = (z_1 - z_2 \im{1} ) \be{1} + (z_1 + z_2 \im{1} ) \be{2} . \label{Eq:IdempRepr}
	\end{align}
This is called the idempotent representation of a bicomplex number. The complex numbers $z_1 - z_2\im{1}$ and $z_1 + z_2 \im{1}$ are called the idempotent components. To simplify the notation, we denote the idempotent components by $z_{\be{1}}$ and $z_{\be{2}}$ respectively. This implies that the idempotent representation of $s$ can be rewritten in the following way:
	\begin{align*}
	s = z_{\be{1}} \be{1} + z_{\be{2}} \be{2} .
	\end{align*}

The idempotent representation is useful because of the following properties: given two bicomplex numbers $s = z_{\be{1}} \be{1} + z_{\be{2}} \be{2}$ and $t = w_{\be{1}} \be{1} + w_{\be{2}} \be{2}$, we have
	\begin{itemize}
	\item $s = 0$ if and only if $z_{\be{1}} = z_{\be{2}} = 0$;
	\item $s = t$ if and only if $z_{\be{1}} = w_{\be{1}}$ and $z_{\be{2}} = w_{\be{2}}$;
	\item $s + t = (z_{\be{1}} + w_{\be{1}}) \be{1} + (z_{\be{2}} + w_{\be{2}}) \be{2}$;
	\item $st = (z_{\be{1}} w_{\be{1}}) \be{1} + (z_{\be{2}} w_{\be{2}}) \be{2}$.
	\end{itemize}
So the bicomplex multiplication can be performed by simply multiplying the idempotent components together. This is a very powerful tool that can be used to extend many results from Complex Analysis to the bicomplex setting.

We can now use the idempotent representation to describe all non-invertible elements (see \cite[Corollary 6.7]{Baley}).
	\begin{theorem}\label{T:InvertibilityBC}
	A bicomplex number $s = z_{\be{1}} \be{1} + z_{\be{2}} \be{2}$ is not invertible if and only if $z_{\be{1}} = 0$ or $z_{\be{2}} = 0$.
	\end{theorem}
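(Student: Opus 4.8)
The plan is to reduce the problem to two independent equations over the field $\mC (\im{1})$ by working entirely in the idempotent representation, where bicomplex multiplication acts componentwise. First I would record that the multiplicative identity has the idempotent form $1 = 1 \cdot \be{1} + 1 \cdot \be{2}$, a direct consequence of the stated property $\be{1} + \be{2} = 1$. Then, given $s = z_{\be{1}} \be{1} + z_{\be{2}} \be{2}$, I would look for a putative inverse $t = w_{\be{1}} \be{1} + w_{\be{2}} \be{2}$ and apply the componentwise product rule to compute
	\begin{align*}
	st = (z_{\be{1}} w_{\be{1}}) \be{1} + (z_{\be{2}} w_{\be{2}}) \be{2}.
	\end{align*}

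By the uniqueness of the idempotent representation, the equation $st = 1$ is then equivalent to the pair of complex equations $z_{\be{1}} w_{\be{1}} = 1$ and $z_{\be{2}} w_{\be{2}} = 1$. For the contrapositive direction, I would argue that if, say, $z_{\be{1}} = 0$, then every product $st$ has vanishing $\be{1}$-component and hence can never equal $1$, so no inverse exists; the case $z_{\be{2}} = 0$ is symmetric. For the converse, assuming both idempotent components are nonzero, I would invoke that $\mC (\im{1})$ is a field to set $w_{\be{1}} = z_{\be{1}}^{-1}$ and $w_{\be{2}} = z_{\be{2}}^{-1}$, and verify directly that the resulting $t$ satisfies $st = 1$.

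There is no genuine obstacle here: the entire difficulty has already been front-loaded into the idempotent machinery established in this section. The one point deserving care is the appeal to uniqueness of the idempotent representation, since that is precisely what licenses splitting $st = 1$ into two scalar conditions; once it is in hand, the statement follows immediately from the fact that a complex number is invertible exactly when it is nonzero.
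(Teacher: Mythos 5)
Your proof is correct. Note that the paper does not actually prove this theorem at all---it quotes it from the literature (Corollary 6.7 of the reference it cites)---but your argument is the standard one and relies precisely on the idempotent-representation facts the paper lists immediately beforehand (existence of the representation, uniqueness of the components, and componentwise multiplication), so the reduction to the two scalar equations $z_{\be{1}} w_{\be{1}} = 1$ and $z_{\be{2}} w_{\be{2}} = 1$ over the field $\mC(\im{1})$ is fully licensed and the conclusion follows.
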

	In Section \ref{sec:GroupStructure}, we will give another characterization of invertible elements using our result on bicomplex involutions.
	
\section{Characterization of Involutions}\label{sec:CharInv}
Let recall more precisely the definition of an involution on the set of bicomplex numbers.
	\begin{definition}\label{D:BCInvolution}
	A function $f : \mB \mC \ra \mB \mC$ is an involution if it satisfies the following properties:
		\begin{enumerate}
		\item $f (f(s )) = s$ for any $s \in \mB \mC$.
		\item $f (s + t ) = f(s) + f(t)$ and $f (\lambda s ) = \lambda f(s)$ for any $s, t \in \mB \mC$ and $\lambda \in \mR$.
		\item $f (st ) = f(s) f(t)$ for any $s, t \in \mB \mC$.
		\end{enumerate}
	\end{definition}
	The usual definition of an involution involves only the first condition. However, for the quaternions and general algebra over  commutative field (see \cite{SangwineTodd2007} and \cite{Lawon2021}), the above definition was adopted. Therefore, to compare our result with the one obtained by the authors of \cite{SangwineTodd2007}, we will stick to the above definition. If we take a closer look at our definition of the term ``involution'', we require that the function is a real linear homomorphism which is its own inverse. 
	
	We will now show our main result presented in the Introduction, that is Theorem \ref{T:MainOne}. %Recall that the first four involutions in the statement were already discovered in the paper \cite{RochonShapiro}. Our result is new because of the discovery of the last two involutions. 
	Before doing the proof, we will first prove some lemmas.
	
		\begin{lemma}\label{L:SquareMinusOne}
		The only bicomplex numbers squaring to $-1$ are $\im{1}, -\im{1} , \im{2} , -\im{2}$.
		\end{lemma}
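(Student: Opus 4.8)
The plan is to work in the idempotent representation $s = z_{\be{1}} \be{1} + z_{\be{2}} \be{2}$ introduced in Section \ref{sec:AlgBC}, where $z_{\be{1}}, z_{\be{2}} \in \mC (\im{1})$. The decisive feature is that multiplication is carried out componentwise in this representation, so that $s^2 = z_{\be{1}}^2 \be{1} + z_{\be{2}}^2 \be{2}$. Writing $-1 = -\be{1} - \be{2}$ by means of the identity $\be{1} + \be{2} = 1$, and invoking the uniqueness of the idempotent components (i.e.\ that equality of bicomplex numbers is equivalent to equality of both idempotent components), the single bicomplex equation $s^2 = -1$ decouples into the two independent complex equations $z_{\be{1}}^2 = -1$ and $z_{\be{2}}^2 = -1$.

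Each of these equations lives in the field $\mC (\im{1})$, in which $-1$ has exactly the two square roots $\pm \im{1}$. Hence $z_{\be{1}} \in \{ \im{1}, -\im{1} \}$ and $z_{\be{2}} \in \{ \im{1}, -\im{1} \}$, with the two choices made independently, yielding exactly four candidate solutions and no others. This immediately gives the count of four claimed in the statement; the point is that passing to idempotent coordinates turns the nonlinear bicomplex equation into two copies of an elementary complex equation.

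The remaining step is to translate each of the four pairs back into standard coordinates. Using $\be{1} + \be{2} = 1$, the computation $\be{1} - \be{2} = \jm{1}$, and the relation $\im{1} \jm{1} = -\im{2}$, I would check that the pairs $(\im{1}, \im{1})$, $(-\im{1}, -\im{1})$, $(-\im{1}, \im{1})$, and $(\im{1}, -\im{1})$ correspond respectively to $\im{1}$, $-\im{1}$, $\im{2}$, and $-\im{2}$. This exhausts all solutions and reproduces exactly the list in the statement.

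I do not expect a genuine obstacle here: the idempotent representation does all the heavy lifting, and the only point demanding care is the back-conversion in the last paragraph, where one must keep track of signs through the relations between $\be{1}, \be{2}, \jm{1}$ and the units $\im{1}, \im{2}$. A brute-force alternative—expanding $s = x_1 + x_{\im{1}} \im{1} + x_{\im{2}} \im{2} + x_{\jm{1}} \jm{1}$, squaring, and solving the resulting system of four real equations obtained from $s^2 = -1$—would also work but is considerably more tedious, which is why I favor the idempotent approach.
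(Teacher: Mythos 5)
Your proof is correct and follows essentially the same route as the paper: pass to the idempotent representation, use componentwise multiplication to reduce $s^2=-1$ to $z_{\be{1}}^2=z_{\be{2}}^2=-1$ in $\mC(\im{1})$, and enumerate the four resulting pairs. Your back-conversion (including the signs via $\be{1}-\be{2}=\jm{1}$ and $\im{1}\jm{1}=-\im{2}$) checks out, and is in fact slightly more explicit than what the paper writes.
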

		\begin{proof}
		Let $s = z_{\be{1}} \be{1} + z_{\be{2}} \be{2}$ be a bicomplex number written in its idempotent representation. Then we have
			\begin{align*}
			s^2 = z_{\be{1}}^2 \be{1} + z_{\be{2}}^2 \be{2} .
			\end{align*}
		Since $-1 = (-1) \be{1} + (-1) \be{2}$, we obtain
			\begin{align*}
			z_{\be{1}}^2 = -1 \quad \text{ and } \quad z_{\be{2}}^2 = -1 .
			\end{align*}
		Since $z_{\be{1}}$ and $z_{\be{2}}$ are complex numbers, we therefore conclude that the only possibilities for $z_{\be{1}}$ and $z_{\be{2}}$ are $\im{1}$ or $-\im{1}$. Replacing in the idempotent representation of $s$, we obtain the four possible cases in the statement of the lemma.
		\end{proof}
	If we compare this Lemma \ref{L:SquareMinusOne} with what we know on the quaternions, then we find a second major difference between the two algebras. Indeed, any pure quaternion $q = a \im{} + b \jm{} + c \mathbf{k}$ with $a^2 + b^2 + c^2 = 1$ squares to $-1$. In other words, there are infinitely many quaternions $q$ such that $q^2 = -1$. %In a sense, Lemma \ref{L:SquareMinusOne} tells us that the bicomplex numbers looks more like the complex numbers than the quaternions do.
	
	The next lemma combined with the previous one explain why we encounter only finitely many involutions of the bicomplex numbers.
		\begin{lemma}\label{L:ActionImag}
		If $f : \mB\mC \ra \mB \mC$ is a bijection satisfying the conditions (2) and (3) in Definition \ref{D:BCInvolution}, then its action is uniquely determined by its action on the imaginary units $\im{1}$ and $\im{2}$.
		\end{lemma}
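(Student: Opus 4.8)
The plan is to exploit the fact that, in its real-component form $s = x_1 + x_{\im{1}} \im{1} + x_{\im{2}} \im{2} + x_{\jm{1}} \jm{1}$, every bicomplex number is a unique real-linear combination of $1$, $\im{1}$, $\im{2}$, and $\jm{1}$; that is, these four elements form a basis of $\mB\mC$ as a four-dimensional real vector space. By the real-linearity in condition~(2), $f$ is completely determined by the four values $f(1)$, $f(\im{1})$, $f(\im{2})$, and $f(\jm{1})$. It therefore suffices to show that $f(1)$ and $f(\jm{1})$ are already forced once $f(\im{1})$ and $f(\im{2})$ are known.

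First I would pin down $f(1)$. Applying condition~(3) to the identity $1 = 1 \cdot 1$ gives $f(1) = f(1)^2$, so $f(1)$ is idempotent. Writing $f(1) = a \be{1} + b \be{2}$ in the idempotent representation and squaring forces $a^2 = a$ and $b^2 = b$ in $\mC(\im{1})$, hence $f(1) \in \{0, \be{1}, \be{2}, 1\}$. To single out $f(1) = 1$ I would invoke bijectivity: since $f$ is a homomorphism, $f(1) f(s) = f(1 \cdot s) = f(s)$ for every $s$, and because $f$ is onto, $f(s)$ ranges over all of $\mB\mC$; thus $f(1) y = y$ for all $y \in \mB\mC$, which means $f(1)$ is the multiplicative identity, so $f(1) = 1$. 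Real-linearity then yields $f(\lambda) = \lambda f(1) = \lambda$ for every $\lambda \in \mR$.

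Next, because $\jm{1} = \im{1} \im{2}$ by definition of the hyperbolic unit, condition~(3) gives $f(\jm{1}) = f(\im{1} \im{2}) = f(\im{1}) f(\im{2})$, so $f(\jm{1})$ is determined by the images of $\im{1}$ and $\im{2}$. Combining the three determined values with linearity, a general element maps to $f(s) = x_1 + x_{\im{1}} f(\im{1}) + x_{\im{2}} f(\im{2}) + x_{\jm{1}} f(\im{1}) f(\im{2})$, which depends only on $f(\im{1})$ and $f(\im{2})$, establishing the claim.

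The step I expect to be the main obstacle is ruling out the spurious idempotent possibilities $f(1) \in \{0, \be{1}, \be{2}\}$. The clean way around it is to use surjectivity so that $f(1)$ must act as a two-sided identity on the whole ring, rather than arguing each idempotent separately. Everything else follows directly from real-linearity together with the single multiplicative relation $\jm{1} = \im{1}\im{2}$.
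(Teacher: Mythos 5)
Your proof is correct and follows essentially the same route as the paper: decompose $f$ over the real basis $1, \im{1}, \im{2}, \jm{1}$ using real-linearity, use $f(\jm{1}) = f(\im{1})f(\im{2})$, and pin down $f(1) = 1$. The one place you go beyond the paper is in justifying $f(1)=1$ --- the paper simply asserts it from $1$ being the multiplicative unit, whereas your surjectivity argument ruling out the idempotents $0, \be{1}, \be{2}$ is a worthwhile addition, since $\mB\mC$ has nontrivial idempotents and a multiplicative real-linear map could a priori send $1$ to one of them.
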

		\begin{proof}
		Since $f$ is real linear, given any bicomplex number $s = x_1 + x_{\im{1}} \im{1} + x_{\im{2}} \im{2} + x_{\jm{1}} \jm{1}$, we can decompose the action of $f$ in the following way:
			\begin{align*}
			f (s) = x_1 f(1) + x_{\im{1}} f(\im{1}) + x_{\im{2}} f (\im{2}) + x_{\jm{1}} f(\jm{1} ) .
			\end{align*}
		We therefore see that the action of $f$ is determined by its action on the units $1$, $\im{1}$, $\im{2}$ and $\jm{1}$. However, we know that $\jm{1} = \im{1} \im{2}$ and therefore, since $f$ is a homomorphism, $f (\jm{1}) = f (\im{1}) f (\im{2})$. Also, since $1$ is the unit for the bicomplex multiplication, we must have $f(1) = 1$. This concludes the proof.
		\end{proof}
	
	We will now need two more lemmas. In the first lemma, by a signed imaginary unit $\im{}$, we mean any element of the set $\{ \im{1} , -\im{1} , \im{2} , -\im{2} \}$. Also, by a signed hyperbolic unit, we mean any element of the set $\{ \jm{1} , -\jm{1} \}$. 
		\begin{lemma}\label{L:ImagToImagHypToHyp}
		If $f$ is a bijection that satisfies the conditions (2) and (3) in Definition \ref{D:BCInvolution}, then $f$ maps any signed imaginary unit to a signed imaginary unit and any signed hyperbolic unit to a signed hyperbolic unit.
		\end{lemma}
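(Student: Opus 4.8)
The plan is to exploit the fact that both signed imaginary units and signed hyperbolic units are singled out by a purely algebraic square identity, together with the principle that a multiplicative homomorphism must transport such identities. First I would record two elementary consequences of conditions (2) and (3). The equality $f(1) = 1$ follows exactly as in the proof of Lemma \ref{L:ActionImag}: since $f(s) = f(s \cdot 1) = f(s) f(1)$ for every $s$ and $f$ is surjective, $f(1)$ acts as a multiplicative identity on all of $\mB\mC$ and hence equals $1$. From real-linearity (condition (2) with $\lambda = -1$) one then gets $f(-1) = -f(1) = -1$.

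Next, for the imaginary units I would simply push the defining relation through $f$. If $\im{}$ is any signed imaginary unit, then $\im{}^2 = -1$, so $f(\im{})^2 = f(\im{}^2) = f(-1) = -1$. By Lemma \ref{L:SquareMinusOne}, the only bicomplex numbers squaring to $-1$ are $\im{1}, -\im{1}, \im{2}, -\im{2}$, whence $f(\im{})$ is again a signed imaginary unit. No further argument is needed in this case, since the set of square roots of $-1$ coincides exactly with the set of signed imaginary units.

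For the hyperbolic units the strategy is identical in spirit, but it first requires the analogue of Lemma \ref{L:SquareMinusOne} for the equation $s^2 = 1$. Writing $s = z_{\be{1}} \be{1} + z_{\be{2}} \be{2}$ one has $s^2 = z_{\be{1}}^2 \be{1} + z_{\be{2}}^2 \be{2}$, so $s^2 = 1$ forces $z_{\be{1}}^2 = z_{\be{2}}^2 = 1$; as the idempotent components are complex, each must equal $\pm 1$, and the four sign choices recombine, via $\be{1} + \be{2} = 1$ and $\be{1} - \be{2} = \jm{1}$, to give precisely $s \in \{1, -1, \jm{1}, -\jm{1}\}$. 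Applying $f$ to the relation $\jms{1} = 1$ then yields $f(\jm{1})^2 = 1$, so $f(\jm{1})$ lies in this four-element set.

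The one place demanding care—and the main obstacle—is ruling out the two scalar possibilities $f(\jm{1}) = 1$ and $f(\jm{1}) = -1$, since preserving the square alone does not distinguish $\pm \jm{1}$ from $\pm 1$. This is exactly where the bijectivity hypothesis enters: because $f(1) = 1$ and $f(-1) = -1$ while $\jm{1} \notin \{1, -1\}$, injectivity forbids $f(\jm{1})$ from taking either value. Hence $f(\jm{1}) \in \{\jm{1}, -\jm{1}\}$, a signed hyperbolic unit, and correspondingly $f(-\jm{1}) = -f(\jm{1})$ is the other signed hyperbolic unit. This establishes both claims of the lemma.
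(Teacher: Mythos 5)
Your proof is correct, and it is built on the same basic mechanism as the paper's: push the square identities $\ims{}=-1$ and $\jms{1}=1$ through the multiplicative homomorphism and use bijectivity to exclude the scalar values. The difference is in how the conclusion is actually pinned down. The paper argues by contradiction only that a signed imaginary unit cannot land on a signed hyperbolic unit (and that neither can land on $1$), and then asserts the lemma; it never explicitly says why $f(\im{})$ must lie in the eight-element set of units at all, nor does it spell out the hyperbolic case. You close both gaps directly: for the imaginary units you invoke Lemma \ref{L:SquareMinusOne} to conclude $f(\im{})\in\{\pm\im{1},\pm\im{2}\}$ from $f(\im{})^2=-1$, and for the hyperbolic units you prove the analogous classification $s^2=1\iff s\in\{1,-1,\jm{1},-\jm{1}\}$ (your idempotent computation, using $\be{1}-\be{2}=\jm{1}$, is right) and then use injectivity together with $f(1)=1$, $f(-1)=-1$ to exclude $\pm 1$. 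Your derivation of $f(1)=1$ via surjectivity is also more careful than the paper's, which matters here because $\mB\mC$ has nontrivial idempotents satisfying $u^2=u$. In short: same strategy, but your version is the more complete argument and could stand as a tightened replacement for the one in the paper.
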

		\begin{proof}
		We already know that $f(1) = 1$. Since $f$ is bijective, this implies that $f(\im{}) \neq 1$ and $f (\jm{}) \neq 1$ for any imaginary unit $\im{}$ and hyperbolic unit $\jm{}$. Suppose that some signed imaginary unit $\im{}$ is mapped to a signed hyperbolic unit $\jm{}$, that is $f(\im{}) = \jm{}$. Since $\jms{} = 1$, we therefore have
			\begin{align*}
			f(\im{})^2 = \jms{} = 1
			\end{align*}
		However, we have $f(\im{})^2 = f(\ims{}) = -1$ since $\ims{} = -1$. This is a contradiction and we must conclude that $f$ maps each signed imaginary unit on a signed imaginary unit and a signed hyperbolic unit on a signed hyperbolic unit.
		\end{proof}
	
		\begin{lemma}\label{L:NumberBijections}
		There are only $8$ bijections of $\mB \mC$ satisfying the conditions (2) and (3) in Definition \ref{D:BCInvolution}.
		\end{lemma}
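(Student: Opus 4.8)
The plan is to use Lemma~\ref{L:ActionImag} to reduce the count to an enumeration of the admissible pairs $(f(\im{1}), f(\im{2}))$, then to prune that list with Lemmas~\ref{L:SquareMinusOne} and~\ref{L:ImagToImagHypToHyp}, and finally to confirm that every surviving candidate is realized by a genuine bijection so that the count is exact.

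First I would record that $f(1) = 1$ (established in the proof of Lemma~\ref{L:ActionImag}) and that, since $\ims{1} = \ims{2} = -1$, condition~(3) forces $f(\im{1})^2 = f(\ims{1}) = f(-1) = -1$, and likewise $f(\im{2})^2 = -1$. By Lemma~\ref{L:SquareMinusOne}, each of $f(\im{1})$ and $f(\im{2})$ therefore lies in the four-element set $\{\im{1}, -\im{1}, \im{2}, -\im{2}\}$. Since, by Lemma~\ref{L:ActionImag}, the entire map is determined by this pair, there are at most $4 \times 4 = 16$ candidates to consider.

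Next I would discard the pairs in which $f(\im{1})$ and $f(\im{2})$ are signed copies of the \emph{same} unit. If both lie in $\{\im{1}, -\im{1}\}$, then $f(\jm{1}) = f(\im{1}) f(\im{2}) = (\pm \im{1})(\pm \im{1}) = \mp 1$, and if both lie in $\{\im{2}, -\im{2}\}$ the same computation again gives $f(\jm{1}) = \mp 1$. In either case $f(\jm{1}) \in \{1, -1\}$ is not a signed hyperbolic unit, contradicting Lemma~\ref{L:ImagToImagHypToHyp} (equivalently, it collides with $f(1)$ or $-f(1)$, contradicting injectivity). This removes the $8$ offending pairs and leaves exactly the $8$ pairs in which one of $f(\im{1}), f(\im{2})$ is a signed $\im{1}$ and the other a signed $\im{2}$.

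Finally, to see that the count is exactly $8$ and not merely an upper bound, I would verify that each surviving assignment extends to a bijective homomorphism. For each such pair one computes $f(\jm{1}) = f(\im{1}) f(\im{2}) = \pm \jm{1}$, so $f$ permutes the real basis $\{1, \im{1}, \im{2}, \jm{1}\}$ up to sign; hence its real-linear extension is invertible, with inverse of the same shape, and distinct assignments give distinct maps. Condition~(3) then reduces, by $\mR$-bilinearity of the bicomplex product, to checking $f(uv) = f(u)f(v)$ on the finitely many products of basis units, which follows from the relations $\ims{1} = \ims{2} = -1$, $\jms{1} = 1$, and $\im{1} \im{2} = \jm{1}$. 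The step requiring the most care is this last verification: one must confirm that the independent sign choices for $f(\im{1})$ and $f(\im{2})$ stay consistent across every basis product, since a single mismatched sign would destroy multiplicativity.
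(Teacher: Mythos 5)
Your proof is correct and follows essentially the same route as the paper: reduce via Lemma~\ref{L:ActionImag} to the pair $(f(\im{1}), f(\im{2}))$, constrain each image to the four signed imaginary units, rule out the assignments sending both units to signed copies of the same unit, and count eight. You are in fact slightly more complete than the paper, which organizes the count as a direct case split (interchanged vs.\ not interchanged, four bijections each) and asserts the total without explicitly verifying that each of the eight candidate assignments extends to a multiplicative bijection.
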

		\begin{proof}
		By Lemma \ref{L:ActionImag}, we only need to figure out the possible values of the involution $f$ on the imaginary units $\im{1}$ and $\im{2}$. 
	
	Since $f$ is bijective, if $f (\im{1}) = \pm\im{1}$, then by Lemma \ref{L:ImagToImagHypToHyp} we must have $f(\im{2}) = \pm\im{2}$ respectively. It may also happen that the imaginary units $\im{1}$ and $\im{2}$ are interchanged. So we separate in two cases: 
		\begin{itemize}
		\item When $\im{1}$ and $\im{2}$ are not interchanged: There are two possible values for $\im{1}$ and $\im{2}$ which are respectively $\pm \im{1}$ and $\pm\im{2}$. We then obtain four bijections.
		\item When $\im{1}$ and $\im{2}$ are interchanged: There are two possible values for $\im{1}$ and $\im{2}$, which are respectively $\pm \im{2}$ and $\pm \im{1}$. We then obtain four other bijections.
		\end{itemize}
	Combining all the cases altogether, we obtain eight bijections satisfying conditions (2) and (3) in Definition \ref{D:BCInvolution}.
		\end{proof}
		
	The proof of our main theorem is then a direct consequence of the last lemma.
	\begin{proof}[Proof of Theorem \ref{T:MainOne}]
	Since $f^2 = \mathrm{Id}$ where we recall that $\mathrm{Id}$ is the identity mapping, then $f$ is bijective. We know from the last lemma that there are only $8$ possible bijections satisfying the conditions (2) and (3) of Definition \ref{D:BCInvolution}. We will inspect each cases according to two categories:
		\begin{itemize}
		\item When $\im{1}$ and $\im{2}$ are not interchanged. In this case, there are two possible values for $\im{1}$ and $\im{2}$ which are respectively $\pm \im{1}$ and $\pm\im{2}$. Writing down explicitly each case, we obtain the expressions (1), (2), (3) and (4) in the list of Theorem \ref{T:MainOne}. In each case, the condition $f(f(s)) = s$ is satisfied.
		\item When $\im{1}$ and $\im{2}$ are interchanged. In this case, since $f (f (s)) = s$, the choice of sign for the value of $f(\im{1})$ will automatically determine the sign of the value of $f(\im{2})$. Then there are only two cases: $f(\im{1}) = \im{2}$ and $f (\im{2}) = \im{1}$ or $f (\im{1}) = -\im{2}$ and $f (\im{2}) = -\im{1}$. Writing down explicitly each case, we obtain the expressions (5) and (6) in the list of Theorem \ref{T:MainOne}. 
		\end{itemize}
	This completes the proof.
	\end{proof}
	
It is quite useful to have the expression of the involution in terms of the idempotent components. It is not difficult to accomplish this task with some basic manipulations.
	\begin{theorem}
	If $s = z_{\be{1}} \be{1} + z_{\be{2}} \be{2}$, then the idempotent expressions of each involution are respectively
		\begin{enumerate}
		\begin{multicols}{2}
		\item $s^{\dagger_0} = z_{\be{1}} \be{1} + z_{\be{2}} \be{2}$;
		\item $s^{\dagger_1} = z_{\be{2}} \be{1} + z_{\be{1}} \be{2} $;
		\item $s^{\dagger_2} = \overline{z}_{\be{2}} \be{1} + \overline{z}_{\be{1}} \be{2}$;
		\item $s^{\dagger_3} = \overline{z}_{\be{1}} \be{1} + \overline{z}_{\be{2}} \be{2}$;
		\item $s^{\dagger_4} = \overline{z}_{\be{1}} \be{1} + z_{\be{2}} \be{2}$.
		\item $s^{\dagger_5} = z_{\be{1}} \be{1} + \overline{z}_{\be{2}} \be{2}$;
		\end{multicols}
		\end{enumerate}
	\end{theorem}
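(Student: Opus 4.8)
The plan is to convert each of the six closed forms from Theorem \ref{T:MainOne}, which are given in the $\im{2}$-representation $s^{\dagger_k} = z_1' + z_2' \im{2}$, into the idempotent representation by simply reading off its idempotent components. Recall from the formula $s\be{1} = (z_1 - z_2\im{1})\be{1}$ (and its $\be{2}$ analogue) that for an arbitrary bicomplex number $w = w_1 + w_2 \im{2}$ the idempotent components are $w_1 - w_2 \im{1}$ in the $\be{1}$-slot and $w_1 + w_2 \im{1}$ in the $\be{2}$-slot. Applying this with $w = s^{\dagger_k}$ reduces the theorem to six short substitutions. It is convenient to record once and for all how conjugation of the original components behaves: since $z_{\be{1}} = z_1 - z_2 \im{1}$, $z_{\be{2}} = z_1 + z_2 \im{1}$ and $\overline{\im{1}} = -\im{1}$, we have $\overline{z}_{\be{1}} = \overline{z}_1 + \overline{z}_2 \im{1}$ and $\overline{z}_{\be{2}} = \overline{z}_1 - \overline{z}_2 \im{1}$. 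These two identities are exactly what let me recognize each computed component as one of $z_{\be{1}}, z_{\be{2}}, \overline{z}_{\be{1}}, \overline{z}_{\be{2}}$.

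For the four involutions $\dagger_0, \dagger_1, \dagger_2, \dagger_3$, which do not interchange $\im{1}$ and $\im{2}$, the verification is immediate. For instance, $\dagger_1$ has $z_1' = z_1$ and $z_2' = -z_2$, so its $\be{1}$-component is $z_1 - (-z_2)\im{1} = z_1 + z_2 \im{1} = z_{\be{2}}$ and its $\be{2}$-component is $z_1 + (-z_2)\im{1} = z_{\be{1}}$, giving entry (2). The cases $\dagger_0, \dagger_2, \dagger_3$ are handled identically, with the two conjugation identities above disposing of $\dagger_2$ and $\dagger_3$.

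The two involutions $\dagger_4$ and $\dagger_5$ that interchange $\im{1}$ and $\im{2}$ require a little more care, and this is where I expect the only genuine bookkeeping to lie. Because their defining formulas separate $z_1, z_2$ into real and imaginary parts, I would substitute $z_1 = \Re(z_1) + \Im(z_1)\im{1}$ and $z_2 = \Re(z_2) + \Im(z_2)\im{1}$ throughout, form $z_1' - z_2' \im{1}$ and $z_1' + z_2' \im{1}$, and use $\im{1}^2 = -1$ to collect terms. Comparing the results against $z_{\be{1}}, z_{\be{2}}$ and their conjugates (expanded in the same real and imaginary coordinates) then identifies each idempotent component. The main point of attention is that this comparison also pins down which of the two slots carries the conjugation bar, so one must track carefully the signs introduced by $f(\im{1}) = \mp\im{2}$ in order to attach the bar to the correct idempotent.

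Alternatively, and more conceptually, one can avoid coordinates by using that $f$ is a homomorphism together with Lemma \ref{L:ImagToImagHypToHyp}: since $f$ sends $\jm{1}$ to $\pm\jm{1}$, it either fixes both idempotents (when $f(\jm{1}) = \jm{1}$) or interchanges them (when $f(\jm{1}) = -\jm{1}$). Writing $f(s) = f(z_{\be{1}})f(\be{1}) + f(z_{\be{2}})f(\be{2})$ and then reducing with the slot relations $\im{2}\be{1} = -\im{1}\be{1}$ and $\im{2}\be{2} = \im{1}\be{2}$, which follow directly from $\be{1} = (1+\jm{1})/2$ and $\be{2} = (1-\jm{1})/2$, recovers the same six expressions. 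Either route is routine; the coordinate substitution is the more elementary and is the one I would write out in full, reserving the homomorphism argument as a cross-check on the placement of the bars in the $\dagger_4$ and $\dagger_5$ cases.
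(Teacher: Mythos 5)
Your method is the right one and is essentially the paper's own (the paper gives no written proof, only the remark that basic manipulations suffice): read off the idempotent components $w_1 - w_2\im{1}$ and $w_1 + w_2\im{1}$ of each closed form $w_1 + w_2\im{2}$ from Theorem \ref{T:MainOne}. Your treatment of $\dagger_0,\dagger_1,\dagger_2,\dagger_3$, including the identities $\overline{z}_{\be{1}} = \overline{z}_1 + \overline{z}_2\im{1}$ and $\overline{z}_{\be{2}} = \overline{z}_1 - \overline{z}_2\im{1}$, is correct. The gap is that you stop at a plan precisely for $\dagger_4$ and $\dagger_5$, the only two cases where the placement of the conjugation bar is genuinely at stake; a proof must actually carry out the substitution you describe.

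When you do carry it out, the bookkeeping you warn about bites. From $(z_1+z_2\im{2})^{\dagger_4}=(\Re(z_1)-\Re(z_2)\im{1})+(-\Im(z_1)+\Im(z_2)\im{1})\im{2}$, the $\be{1}$-component is $z_1'-z_2'\im{1}=(\Re(z_1)+\Im(z_2))+(\Im(z_1)-\Re(z_2))\im{1}=z_{\be{1}}$ (no bar), and the $\be{2}$-component is $z_1'+z_2'\im{1}=(\Re(z_1)-\Im(z_2))-(\Im(z_1)+\Re(z_2))\im{1}=\overline{z}_{\be{2}}$, so $s^{\dagger_4}=z_{\be{1}}\be{1}+\overline{z}_{\be{2}}\be{2}$; likewise $s^{\dagger_5}=\overline{z}_{\be{1}}\be{1}+z_{\be{2}}\be{2}$. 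In other words, items (5) and (6) of the statement as printed are transposed relative to the definitions of $\dagger_4$ and $\dagger_5$ in Theorem \ref{T:MainOne}; the corrected assignment is the one the paper itself uses later, both in the summary list of Section \ref{sec:GroupStructure} and in the computation $(s^{\dagger_4})^{\dagger_1}=(z_{\be{1}}\be{1}+\overline{z}_{\be{2}}\be{2})^{\dagger_1}=s^{\ddagger_6}$. So your approach is sound, but as written the proposal would purport to verify a statement that the completed computation actually refutes: finishing the $\dagger_4,\dagger_5$ cases is essential, and doing so corrects the statement rather than confirming it.
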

	
A natural question to ask now would be: how many functions $f : \mB \mC \ra \mB \mC$ are there satisfying the conditions (2) and (3) of Definition \ref{D:BCInvolution}, together with the more general condition $f^n (s ) = s$ for any bicomplex number $s$, where $n\geq 2$ is an integer? We call the functions satisfying the previous conditions $n$-involutions where $n \geq 2$ is an integer. Note that $n$-involutions are still bijections because the condition $f^n = \mathrm{Id}$ means that $f^{n-1}$ is the inverse of $f$.

To answer completely this question, we need several lemmas. We will denote by $S_4$ the set of all permutations of $\{ \im{1} , -\im{1} , \im{2} , -\im{2} \}$. We use the same notation as the symmetric group because we are permutating four different symbols. For the symmetric group $S_4$, we will use the same term ``$n$-involution'' to refer to an element $\sigma \in S_4$ such that $\sigma^n = \sigma$ where $\sigma^n := \sigma \circ \sigma \cdots \circ \sigma$, the $n$-fold composition of $\sigma$.
	\begin{lemma}\label{L:InvolutionSubsetSFour}
	The set of $n$-involutions on $\mB \mC$ is a subset of the set of $n$-involutions of $S_4$.
	\end{lemma}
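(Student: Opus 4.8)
We have $n$-involutions on $\mathbb{BC}$ (functions satisfying conditions (2), (3) and $f^n = \mathrm{Id}$). We want to show these form a subset of $n$-involutions of $S_4$.

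By Lemma 3 (L:ImagToImagHypToHyp), $f$ maps signed imaginary units to signed imaginary units. So $f$ restricts to a permutation of $\{i_1, -i_1, i_2, -i_2\}$.

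So the natural approach: show that an $n$-involution $f$ on $\mathbb{BC}$ induces (by restriction) an element of $S_4$ that is an $n$-involution.

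**Key steps:**
1. $f$ restricts to a map on the four signed imaginary units (by L:ImagToImagHypToHyp).
2. This restriction is a bijection (injective, finite set).
3. $(f|)^n = (f^n)| = \mathrm{Id}|$, so the restriction satisfies $\sigma^n = \mathrm{Id}$... but wait, the definition says $\sigma^n = \sigma$, not $\sigma^n = \mathrm{Id}$!

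Let me re-read the definition of $n$-involution in $S_4$.

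The plan is to build an injection from the set of $n$-involutions on $\mB\mC$ into $S_4$ by restricting each such function to the four signed imaginary units, and then to check that every function in the image satisfies the $n$-involution condition of $S_4$. Since an $n$-involution is in particular a bijection satisfying conditions (2) and (3) of Definition~\ref{D:BCInvolution}, Lemmas~\ref{L:ActionImag} and~\ref{L:ImagToImagHypToHyp} both apply to it, and these are exactly the tools the argument needs.

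First I would fix an $n$-involution $f$ and look at its effect on the four-element set $\{\im{1}, -\im{1}, \im{2}, -\im{2}\}$. By Lemma~\ref{L:ImagToImagHypToHyp}, $f$ sends each signed imaginary unit to another signed imaginary unit, so $f$ carries this set into itself; since $f$ is injective, the restriction $\sigma_f := f|_{\{\im{1}, -\im{1}, \im{2}, -\im{2}\}}$ is a bijection of the four units, that is, a well-defined element of $S_4$. Lemma~\ref{L:ActionImag} then guarantees that $f$ is completely recovered from $\sigma_f$ (indeed from $f(\im{1})$ and $f(\im{2})$ alone), so the assignment $f \mapsto \sigma_f$ is injective and realizes the $n$-involutions of $\mB\mC$ as a \emph{subset} of $S_4$, as the statement demands.

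Next I would verify that restriction is compatible with composition. Because each admissible $f$ preserves the four-element set, the restriction of a composite equals the composite of the restrictions; in particular $\sigma_{f^k} = \sigma_f^{\,k}$ for every $k \geq 1$, and the identity map $\mathrm{Id}$ restricts to the identity permutation $\mathrm{id}$ of $S_4$. Applying this to the defining relation $f^n = \mathrm{Id}$ yields $\sigma_f^{\,n} = \sigma_{f^n} = \sigma_{\mathrm{Id}} = \mathrm{id}$, which is precisely the condition that $\sigma_f$ be an $n$-involution of $S_4$. This completes the inclusion.

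I do not expect any single hard computation here; the main obstacle is purely structural bookkeeping. One must combine the injectivity of $f$ with Lemma~\ref{L:ImagToImagHypToHyp} to be sure the four signed units are genuinely \emph{permuted} rather than collapsed, and invoke Lemma~\ref{L:ActionImag} to know that distinct $n$-involutions of $\mB\mC$ induce distinct permutations, so that the map $f \mapsto \sigma_f$ is honestly an injection. Once these points are settled, the relation $\sigma_f^{\,n} = \mathrm{id}$ follows immediately by restricting $f^n = \mathrm{Id}$, and no further work is needed.
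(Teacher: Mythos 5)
Your proof is correct and follows essentially the same route as the paper's: restrict $f$ to the four signed imaginary units to obtain a permutation $\sigma_f \in S_4$, use Lemma \ref{L:ActionImag} for injectivity of $f \mapsto \sigma_f$, and restrict the relation $f^n = \mathrm{Id}$ to conclude $\sigma_f^{\,n} = \mathrm{id}$. You are in fact slightly more careful than the paper in explicitly invoking Lemma \ref{L:ImagToImagHypToHyp} to justify that the four signed units are genuinely permuted, and your reading of the paper's condition ``$\sigma^n = \sigma$'' as ``$\sigma^n = \mathrm{id}$'' is exactly what the paper's own verification uses.
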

	\begin{proof}
	From Lemma \ref{L:ActionImag}, we know that the action of $f$ is completely determined by its action on the imaginary units $\im{1}$, $\im{2}$. The possible values on each unit are $\im{1} , -\im{1} , \im{2} , -\im{2}$. Then, the permutation $\sigma_f := (f (\im{1}) , f (-\im{1}) , f (\im{2}) , f (-\im{2}))$ is well-defined and the application $f \mapsto \sigma_f$ is an injection in $S_4$. It remains to show that the permutation $\sigma_f$ is an $n$-involution for $S_4$. If we apply $n$-times the permutation and if we use the fact that $f$ satisfies $f^n (s) = s$ for any bicomplex number $s$, we obtain 
		\begin{align*}
		\sigma_f^n = (f^n (\im{1}) , f^n (-\im{1}) , f^n (\im{2}) , f^n (-\im{2}) ) = (\im{1} , -\im{1} , \im{2} , -\im{2} ) .
		\end{align*}
	This confirms that the permutation is an $n$-involution for $S_4$ and it completes the proof.
	\end{proof}
Lemma \ref{L:InvolutionSubsetSFour} has the following consequence. In the sequel, a non-trivial $n$-involution for $S_4$ is an $n$-involution that is different from the identity permutation.
	\begin{corollary}\label{C:PrimeInvolution}
	The only $p$-involution of $\mB \mC$, for a prime number $p \geq 3$, is the identity.
	\end{corollary}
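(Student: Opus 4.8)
The plan is to combine Lemma \ref{L:InvolutionSubsetSFour} with the elementary group theory of $S_4$, together with one structural constraint that the statement of that lemma does not yet fully exploit. Let $f$ be a $p$-involution of $\mB\mC$ and let $\sigma_f \in S_4$ be the associated permutation. Lemma \ref{L:InvolutionSubsetSFour} gives $\sigma_f^p = \mathrm{Id}$, so the order of $\sigma_f$ divides $p$; since $p$ is prime, that order is either $1$ or $p$. My first step is to recall which orders actually occur in $S_4$: the order of a permutation is the least common multiple of its cycle lengths, so for a permutation of four symbols the possible orders are $1$, $2$, $3$, and $4$. For $p \geq 5$ this already closes the argument, since none of $2$, $3$, $4$ equals an odd prime $p \geq 5$, forcing $\mathrm{ord}(\sigma_f) = 1$, hence $\sigma_f = \mathrm{Id}$, and then $f = \mathrm{Id}$ by Lemma \ref{L:ActionImag}. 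The only genuinely delicate case is $p = 3$, where a priori $\sigma_f$ could be a $3$-cycle.

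To exclude the $p = 3$ case I would use the real-linearity of $f$, which has not yet been used here. Because $f$ is $\mR$-linear, $f(-\im{k}) = -f(\im{k})$ for $k = 1, 2$; combined with Lemma \ref{L:ImagToImagHypToHyp}, this means the permutation $\sigma_f$ of $\{\im{1}, -\im{1}, \im{2}, -\im{2}\}$ commutes with the fixed-point-free involution $\nu$ sending each signed unit to its negative, that is $\nu = (\im{1}\ {-\im{1}})(\im{2}\ {-\im{2}})$. Consequently $\sigma_f$ lies in the centralizer $C_{S_4}(\nu)$. A short count shows this centralizer has order $8$: the conjugacy class of $\nu$ (cycle type $2+2$) has three elements, so $|C_{S_4}(\nu)| = 24/3 = 8$. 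Thus $\sigma_f$ belongs to a $2$-group, in which every element has order a power of $2$, and in particular no $3$-cycle can arise.

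Putting the two observations together yields a uniform argument covering all primes $p \geq 3$ at once: $\mathrm{ord}(\sigma_f)$ is a power of $2$ (by membership in a group of order $8$), yet it also divides the odd prime $p$. The only common value is $1$, so $\sigma_f = \mathrm{Id}$, which forces $f(\im{1}) = \im{1}$ and $f(\im{2}) = \im{2}$; Lemma \ref{L:ActionImag} then gives $f = \mathrm{Id}$.

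I expect the main obstacle to be precisely the $p = 3$ case, i.e.\ ruling out $3$-cycles, and the cleanest route I see is the centralizer/negation-compatibility observation above. If one prefers to avoid naming the centralizer, the same fact can be phrased purely combinatorially: any $\nu$-compatible permutation $\tau$ satisfies $\tau(\nu(x)) = \nu(\tau(x))$, so if $\tau$ fixes $x$ it also fixes $\nu(x)$; since $\nu$ is fixed-point-free the fixed points of $\tau$ come in pairs $\{x, \nu(x)\}$ and hence are even in number. A $3$-cycle in $S_4$ fixes exactly one symbol, an odd number, so it is not $\nu$-compatible and cannot equal $\sigma_f$. Either phrasing eliminates the order-$3$ possibility and completes the proof.
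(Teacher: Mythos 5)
Your proof is correct, and its overall skeleton matches the paper's: both invoke Lemma \ref{L:InvolutionSubsetSFour} to transfer the problem to $S_4$, dispose of primes $p \geq 5$ by noting that $S_4$ has no elements of order exceeding $4$, and then isolate $p = 3$ as the only delicate case. Where you diverge is in how that case is killed. The paper explicitly lists the eight $3$-cycles of $S_4$ (written as tuples of values on $\im{1}, -\im{1}, \im{2}, -\im{2}$) and asserts after a ``straightforward verification'' that none of them can come from a valid homomorphism, gesturing at the fact that the assigned values would fail to define a function. Your argument makes that gesture precise and structural: real-linearity forces $\sigma_f$ to commute with the negation involution $\nu = (\im{1}\ {-\im{1}})(\im{2}\ {-\im{2}})$, so $\sigma_f$ lives in the centralizer $C_{S_4}(\nu)$, a $2$-group of order $8$, and hence cannot have order $3$; equivalently, your parity-of-fixed-points observation rules out $3$-cycles without any enumeration. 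This buys two things: it eliminates the case-by-case check entirely, and it identifies the actual obstruction (compatibility with negation) that the paper leaves implicit. As a bonus, the centralizer computation also explains \emph{a priori} why the full group of $n$-involutions has order dividing $8$, which the paper only obtains later via Lemma \ref{L:NumberBijections}. The paper's enumeration, on the other hand, has the minor virtue of displaying the excluded permutations concretely. Either way, your argument is complete and correct.
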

	\begin{proof}
	According to Lemma \ref{L:InvolutionSubsetSFour}, the set of $p$-involutions of $\mB \mC$, for $p$ a prime number, is a subset of the set of $p$-involutions of $S_4$. When $p > 3$, there are no non-trivial $p$-involutions in the symmetric group $S_4$ because $p > 4$. When $p = 3$, the possible $3$-involutions of $S_4$ are the identity plus the following eight permutations:
		\begin{itemize}
		\begin{multicols}{2}
		\item $(\im{1} , \im{2} , -\im{2}, -\im{1})$;
		\item $(\im{1} , -\im{2} , -\im{1} , \im{2})$;
		\item $(-\im{1}, \im{2} , \im{1} , -\im{2})$;
		\item $(-\im{1} , -\im{2} , \im{2} , \im{1})$;
		\item $(\im{2} , \im{1} , -\im{1} , -\im{2})$;
		\item $(\im{2} , -\im{1} , -\im{2} , \im{1})$;
		\item $(-\im{2} , \im{1} , \im{2} , -\im{1})$;
		\item $(-\im{2} , -\im{1} , \im{1} , \im{2})$. 
		\end{multicols}
		\end{itemize}
	The entries of the permutations correspond to the value of $f (\im{1})$, $f(-\im{1})$, $f (\im{2})$, and $f (-\im{2})$, respectively. A straightforward verification shows that none of the eight permutations in the list correspond to a valid $3$-involution on the algebra of bicomplex numbers. This comes from the basic fact that at least three symbols must be choosen to be interchanged and with this choice, the values attributed to $f$ on the set $\{ \im{1} , -\im{1} , \im{2} , -\im{2}\}$ will not make $f$ a bijection or not even a function.
	\end{proof}
	\begin{theorem}
	For any integer $n \geq 2$ different than a power of $2$, there is no non-trivial $n$-involution for $\mB \mC$.
	\end{theorem}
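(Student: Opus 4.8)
The plan is to reduce the composite exponent $n$ to a prime one and then invoke Corollary \ref{C:PrimeInvolution}, which has already disposed of odd prime exponents. Since $n$ is not a power of $2$, it has an odd prime divisor $p \geq 3$, and the whole point is to manufacture, out of a hypothetical non-trivial $n$-involution, a non-trivial $p$-involution, whose existence the corollary forbids.

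I would carry this out through the order of $f$. Let $f$ be a non-trivial $n$-involution and let $d$ be the least positive integer with $f^d = \mathrm{Id}$; since $f^n = \mathrm{Id}$ we have $d \mid n$, and non-triviality gives $d \geq 2$. The claim is that $d$ must be a power of $2$. Suppose, for contradiction, that an odd prime $p$ divides $d$, and set $g := f^{d/p}$. Then $g$ again satisfies conditions (2) and (3) of Definition \ref{D:BCInvolution}, since a power of a homomorphism is a homomorphism, and $g^p = f^d = \mathrm{Id}$, so $g$ is a bijection and hence a $p$-involution. By the minimality of $d$ we have $f^{d/p} \neq \mathrm{Id}$, so $g$ is a non-trivial $p$-involution with $p \geq 3$, contradicting Corollary \ref{C:PrimeInvolution}. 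Thus no odd prime divides $d$, and $d = 2^a$ for some $a \geq 1$.

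Finally, since the exponent attached to a genuine $n$-involution is its order $d$, we get $n = d = 2^a$, contradicting the hypothesis that $n$ is not a power of $2$; hence no non-trivial $n$-involution exists. The same conclusion can be read directly from the embedding of Lemma \ref{L:InvolutionSubsetSFour}: the associated permutation $\sigma_f \in S_4$ has order in $\{1,2,3,4\}$, and Corollary \ref{C:PrimeInvolution} excludes order $3$, leaving only powers of $2$. I expect the only delicate point to be the bookkeeping that $g = f^{d/p}$ genuinely has order $p$, which is immediate from the minimality of $d$, together with the observation that Corollary \ref{C:PrimeInvolution} applies verbatim because every odd prime is $\geq 3$; there is no substantial obstacle beyond this reduction.
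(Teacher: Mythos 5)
Your proof is correct and follows the same strategy as the paper: reduce to the odd-prime case and invoke Corollary \ref{C:PrimeInvolution}. In fact your argument via the order $d$ of $f$ and the element $g = f^{d/p}$ makes precise the step the paper dismisses with a one-line appeal to the Fundamental Theorem of Arithmetic; note only that, like the paper, you must read ``non-trivial $n$-involution'' as ``of order exactly $n$'' in the final step ($n=d$), since under the literal definition $f^n = \mathrm{Id}$ every non-trivial $2$-involution would also count as a non-trivial $6$-involution.
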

	Similar to the context of $S_4$, the term ``non-trivial $n$-involution'' for the set $\mB \mC$ refers to an $n$-involution for $\mB \mC$ which is different from the identity map.
	\begin{proof}
	From Theorem \ref{T:MainOne}, there are involutions ($2$-involutions) on the set $\mB \mC$. So, this implies that there are $n$-involutions when $n$ is any power of $2$. 
	
	From Corollary \ref{C:PrimeInvolution}, for every prime number $p \geq 3$, there are no non-trivial $p$-involution defined on $\mB \mC$. As a consequence of the Fundamental Theorem of Arithmetic or the Prime Factorization Theorem, we conclude that this is also the case for any other integer $n$ not equal to a power of $2$. This completes the proof.
	\end{proof}
	
	What we need to do now is to find all involutions of order $2^m$ where $m \geq 2$ is an integer, namely the $2^m$-involutions. From Lemma \ref{L:NumberBijections}, however, we simply have to check if the two cases excluded in the proof of Theorem \ref{T:MainOne} are $4$-involutions.
	\begin{corollary}\label{C:4InvolutionsBC}
	There are eight $4$-involutions of $\mB \mC$.
	\end{corollary}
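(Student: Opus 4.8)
The plan is to leverage Lemma \ref{L:NumberBijections}, which tells us that there are exactly eight bijections of $\mB\mC$ satisfying conditions (2) and (3) of Definition \ref{D:BCInvolution}. Since every $4$-involution is in particular such a bijection, these eight maps furnish an upper bound, and the task reduces to checking that each one of them satisfies $f^4 = \mathrm{Id}$. Because the eight bijections are pairwise distinct, showing that all of them are $4$-involutions will give the count of exactly eight.

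First I would dispose of six of the eight at once. By Theorem \ref{T:MainOne}, six of these bijections are involutions, i.e. they satisfy $f^2 = \mathrm{Id}$; for any such map $f^4 = (f^2)^2 = \mathrm{Id}$, so $\dagger_0, \dagger_1, \dagger_2, \dagger_3, \dagger_4, \dagger_5$ are automatically $4$-involutions. This leaves precisely the two bijections that were excluded in the proof of Theorem \ref{T:MainOne}, namely the two interchanging maps that fail to be $2$-involutions: the map determined by $f(\im{1}) = \im{2}$, $f(\im{2}) = -\im{1}$ and the map determined by $f(\im{1}) = -\im{2}$, $f(\im{2}) = \im{1}$.

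By Lemma \ref{L:ActionImag} each of these is completely determined by its values on $\im{1}$ and $\im{2}$, so I only need to iterate the action on these two units, using real-linearity to track the induced action on $-\im{1}$ and $-\im{2}$. For the first map I would compute $f^2(\im{1}) = f(\im{2}) = -\im{1}$, then $f^3(\im{1}) = f(-\im{1}) = -\im{2}$, then $f^4(\im{1}) = f(-\im{2}) = -f(\im{2}) = \im{1}$, and likewise $f^2(\im{2}) = f(-\im{1}) = -\im{2}$, $f^3(\im{2}) = f(-\im{2}) = \im{1}$, $f^4(\im{2}) = \im{2}$. An identical four-step computation handles the second map. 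Hence $f^4 = \mathrm{Id}$ for both, and they are genuine $4$-involutions of exact order $4$ (indeed $f^2 \neq \mathrm{Id}$ by construction).

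Combining the two cases, all eight bijections of Lemma \ref{L:NumberBijections} satisfy $f^4 = \mathrm{Id}$, and since they are distinct there are exactly eight $4$-involutions of $\mB\mC$. The argument is entirely routine; the only point requiring care is the bookkeeping of signs when iterating the interchanging maps, since a single misplaced sign would make a genuine order-$4$ map appear to fail the identity test (or vice versa), so I expect that sign tracking to be the sole, minor, obstacle.
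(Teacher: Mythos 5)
Your proposal is correct and follows the same route as the paper: both reduce to the eight bijections of Lemma \ref{L:NumberBijections}, observe that the six involutions of Theorem \ref{T:MainOne} are automatically $4$-involutions, and then verify that the two remaining sign-mismatched interchanging maps have order $4$. The only difference is that you carry out explicitly the iteration on $\im{1}$ and $\im{2}$ that the paper dismisses as ``straightforward,'' and your sign bookkeeping is accurate.
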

	\begin{proof}
	This is a consequence of Lemma \ref{L:NumberBijections}. We already know that the first six bijections give rise to the involutions in Theorem \ref{T:MainOne}. So they are also $4$-involutions. It is straigthforward to show that the two remaining cases are $4$-involutions.
	\end{proof}
	Here are the expressions of the two remaining $4$-involutions:
		\begin{itemize}
		\item $s^{\ddagger_6} = \Re (z_1) - \Re (z_2) \im{1} + \Im (z_1) \im{2} - \Im (z_2) \jm{1} = \overline{z}_{\be{2}} \be{1} + z_{\be{1}} \be{2}$.
		\item $s^{\ddagger_7} = \Re (z_1) + \Re (z_2) \im{1} - \Im (z_1)\im{2} - \Im (z_2) \jm{1} = z_{\be{2}} \be{1} + \overline{z}_{\be{1}} \be{2}$.
		\end{itemize}
	
\section{Group Structure}\label{sec:GroupStructure}
We will adopt the following terminology from now on. Any $2$-involution on $\mB \mC$ is called a conjugate and any $4$-involution which is not a $2$-involution is called a pseudo-conjugate. The reason for this terminology is the following. The $2$-involutions satisfy the same properties as the complex conjugate and it is natural to call them conjugates. To be more precise, they are bicomplex conjugates. The $4$-involutions which are not $2$-involutions do not quite satisfy the property of the complex conjugate; they do not satisfy the condition $f (f (s )) = s$. However, we just need to apply them two other times to obtain the identity map. For this reason, they almost satisfy the property of being a conjugate.

From the work of Rochon and Shapiro \cite{RochonShapiro}, the four conjugates $\dagger_0$, $\dagger_1$, $\dagger_2$, and $\dagger_3$ form a group under the operation of composition of functions. The Cayley table of the relations between each conjugates is presented in Table \ref{Tb:CayleyTabFirstFourConj} (see the Appendix).	
	
	However, if we try to join $\dagger_4$ and $\dagger_5$ to the group, then we no longer have a group. A way to see it is to compose one of the conjugates, $\dagger_4$ or $\dagger_5$, with the first four conjugates and see that we obtain an element outside of the set of $\{ \dagger_0 , \dagger_1 , \dagger_2 , \dagger_3 , \dagger_4 , \dagger_5 \}$. We can compose $\dagger_4$ and $\dagger_1$. We obtain
		\begin{align*}
		(s^{\dagger_4})^{\dagger_1} = (z_{\be{1}} \be{1} + \overline{z}_{\be{2}} \be{2})^{\dagger_1} = \overline{z}_{\be{2}} \be{1} + z_{\be{1}} \be{2} = s^{\ddagger_6}.
		\end{align*}
	We know that $\ddagger_6$ is not a conjugate from the proof of Corollary  \ref{C:4InvolutionsBC}. We therefore conclude that the set $\{ \dagger_0 , \dagger_1 , \dagger_2 , \dagger_3 , \dagger_4 , \dagger_5 \}$ is not closed under the composition and is therefore not a group.
	
	If we change the point of view and consider the set of all $4$-involutions, we then obtain a group under the composition which is of order $8$. Simple calculations show that the Cayley table for the set $\{ \dagger_0 , \dagger_1 , \dagger_2 , \dagger_3 , \dagger_4 , \dagger_5 , \ddagger_6 , \ddagger_7 \}$ equiped with the composition is the Table \ref{Tb:CayleyEigthFunctions} (see the Appendix). 
	From this table, we see that we obtain a group under the composition and the order of the group is $8$ which is a power of $2$. We denote the group of conjugates and pseudoconjugates by ${\boldsymbol \dagger}$. The following summary of the six conjugations and the two $4$-involutions helped to construct the Cayley table:
		\begin{multicols}{2}
		\begin{enumerate}
		\item[$\dagger_0$:] $f(s) = s$ (identity).
		\item[$\dagger_1$:] $f(s) = z_{\be{2}} \be{1} + z_{\be{1}} \be{2}$.
		\item[$\dagger_2$:] $f(s) = \overline{z}_{\be{2}} \be{1} + \overline{z}_{\be{1}} \be{2}$.
		\item[$\dagger_3$:] $f(s) = \overline{z}_{\be{1}} \be{1} + \overline{z}_{\be{2}} \be{2}$.
		\item[$\dagger_4$:] $f(s) = z_{\be{1}} \be{1} + \overline{z}_{\be{2}} \be{2}$.
		\item[$\dagger_5$:] $f(s) = \overline{z}_{\be{1}} \be{1} + z_{\be{2}} \be{2}$.
		\item[$\ddagger_6$:] $f(s) = \overline{z}_{\be{2}} \be{1} + z_{\be{1}} \be{2}$.
		\item[$\ddagger_7$:] $f(s) = z_{\be{2}} \be{1} + \overline{z}_{\be{1}} \be{2}$.
		\end{enumerate}
		\end{multicols}
	Furthermore, we can extract some subgroups from the whole group. Here are the possible subgroups. Straightforward calculations show that
		\begin{itemize}
		\item $\{ \dagger_0 , \dagger_1 , \dagger_2 , \dagger_3 \}$, $\{ \dagger_0 , \dagger_3 , \dagger_4 , \dagger_5 \}$, and $\{ \dagger_0 , \dagger_3 , \ddagger_6 , \ddagger_7 \}$ are subgroups of order $4$. 
		\item $\{ \dagger_0 , \dagger_i \}$ for $i = 1, 2, \ldots , 5$ are subgroups of order $2$.
		\item $\{ \dagger_0 \}$ is a subgroup of order $1$.
		\end{itemize}
	These are the only subgroups of the group ${\boldsymbol \dagger}$. This is a consequence of the following theorem. We let $D_8$ be the dihedral group of the symmetries of the square. It is generated by two elements: a rotation of $90$ degrees counterclockwise, denoted by $a$ and a reflection about one of the lines joining midpoints of opposite sides, denoted by $x$. The operation is the composition of functions. More explicitly, we have
		\begin{align*}
		D_8 = \{ \mathrm{Id} , a, a^2, a^3 , x, ax, a^2 x, a^3 x \}
		\end{align*}
	where $\mathrm{Id}$ is the identity mapping (doing nothing on the square). The Cayley table is presented in Table \ref{Tb:CayleyTableD8} (see the Appendix).
		
	\begin{theorem}
	The group of all conjugates and pseudoconjugates is isomorphic (in the sense of group) to the dihedral group $D_8$, the group of symmetries of the square.
	\end{theorem}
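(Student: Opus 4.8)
The plan is to prove the isomorphism by the standard generators-and-relations argument: I will exhibit inside $\boldsymbol\dagger$ an element of order four and an element of order two which together generate the group and satisfy the defining relation of $D_8$, and then invoke the universal property of the dihedral presentation. Throughout I would work with the idempotent expressions listed above, viewing each conjugate or pseudoconjugate as a map on the pair of idempotent components $(z_{\be{1}}, z_{\be{2}})$. In this form $\dagger_0$ is the identity, $\dagger_3$ conjugates both slots, $\dagger_4$ and $\dagger_5$ conjugate a single slot, $\dagger_1$ and $\dagger_2$ swap the two slots (without and with conjugation, respectively), and $\ddagger_6, \ddagger_7$ swap the slots and conjugate one of them. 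With this description, composing any two elements of $\boldsymbol\dagger$ becomes a one-line substitution, and the group operation is ordinary composition of functions.

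First I would single out the rotation. Taking $a := \ddagger_6$, so that $a$ sends $(z_{\be{1}}, z_{\be{2}})$ to $(\overline{z}_{\be{2}}, z_{\be{1}})$, a direct substitution gives $a^2 = \dagger_3$, $a^3 = \ddagger_7$, and $a^4 = \dagger_0$; hence $a$ has order four and $\langle a\rangle = \{ \dagger_0 , \ddagger_6 , \dagger_3 , \ddagger_7 \}$, which is exactly the order-four subgroup already identified above. Next I would choose the reflection $x := \dagger_1$, the plain swap, which has order two and does not lie in $\langle a\rangle$. Forming the four products $x, xa, xa^2, xa^3$ (as compositions $x\circ a^k$) I expect to recover $\dagger_1, \dagger_4, \dagger_2, \dagger_5$ respectively, so that the eight elements $\dagger_0, a, a^2, a^3, x, xa, xa^2, xa^3$ are all distinct and exhaust $\boldsymbol\dagger$. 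In particular $a$ and $x$ generate the group.

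The only relation that then remains to be verified is the dihedral relation $x a x^{-1} = a^{-1}$. Since $x = x^{-1}$ and $a^{-1} = \ddagger_7$, this reduces to checking that $\dagger_1 \circ \ddagger_6 \circ \dagger_1 = \ddagger_7$, again a single substitution on $(z_{\be{1}}, z_{\be{2}})$. Having produced generators $a$ of order four and $x$ of order two obeying $x a x^{-1} = a^{-1}$, the universal property of the presentation $\langle a, x \mid a^4 = x^2 = 1,\ x a x^{-1} = a^{-1}\rangle$ of $D_8$ furnishes a surjective homomorphism $D_8 \twoheadrightarrow \boldsymbol\dagger$; as both groups have order eight, this map is an isomorphism. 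Equivalently, one may simply observe that, under this labelling of generators, the Cayley table of $\boldsymbol\dagger$ computed above coincides with Table \ref{Tb:CayleyTableD8}.

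I do not anticipate a genuine obstacle, as the argument is a finite verification. The one point requiring care is bookkeeping: I must fix once and for all the composition convention induced by the postfix notation $s^{\dagger}$ (so that $(s^{\dagger_4})^{\dagger_1}$ means ``apply $\dagger_4$, then $\dagger_1$'') and use it consistently when forming the products above, since a single misordered or mis-signed composition would misidentify which element plays the role of the rotation $a$. Because $D_8$ is isomorphic to its opposite group, the choice of convention does not affect the resulting isomorphism type, but it must be held fixed in order for the relation $x a x^{-1} = a^{-1}$ to come out correctly.
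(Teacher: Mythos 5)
Your proof is correct, but it takes a genuinely different route from the paper. The paper argues by classification: it cites the fact that there are exactly five groups of order $8$, uses the single non-commutation $\dagger_1 \circ \ddagger_6 \neq \ddagger_6 \circ \dagger_1$ to narrow the possibilities to the quaternion group and $D_8$, and then rules out the quaternion group by counting elements of order $4$ (six in $Q_8$ versus only two, namely $\ddagger_6$ and $\ddagger_7$, in $\boldsymbol\dagger$). You instead give a presentation argument: you exhibit $a = \ddagger_6$ of order $4$ (with $a^2 = \dagger_3$, $a^3 = \ddagger_7$), the swap $x = \dagger_1$ of order $2$ outside $\langle a\rangle$, check the dihedral relation $\dagger_1 \circ \ddagger_6 \circ \dagger_1 = \ddagger_7 = a^{-1}$, and verify that the eight words $a^k, x a^k$ exhaust $\boldsymbol\dagger$, so the surjection from $\langle a, x \mid a^4 = x^2 = 1,\ xax^{-1}=a^{-1}\rangle$ is an isomorphism by an order count. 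All of your substitutions on the idempotent pair $(z_{\mathbf{e_1}}, z_{\mathbf{e_2}})$ check out against the list in Section \ref{sec:GroupStructure} (note only that the paper's two listings of the idempotent forms of $\dagger_4$ and $\dagger_5$ are swapped relative to one another, which affects labels but not the group structure). What your approach buys is self-containment and an explicit isomorphism: you do not need the classification of groups of order $8$, and the map $a^k x^\epsilon \mapsto \rho^{-1}(a^k x^\epsilon)$ you construct is essentially the correspondence $\rho$ that the paper only writes down after its proof. What the paper's approach buys is brevity: given the Cayley table, it requires verifying one non-commutation and counting orders rather than checking the full set of products $x a^k$. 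Your closing caveat about fixing the composition convention for the postfix notation is well taken and is exactly the bookkeeping point on which such a verification could silently go wrong.
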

	\begin{proof}
	In \cite[Table 1]{Newman1990}, the number of groups with $8$ elements is $5$ and this result is attributed to Cayley (1859). Since $\dagger_1 \circ \ddagger_6 \neq \ddagger_6 \circ \dagger_1$, the group ${\boldsymbol \dagger}$ is noncommutative. Amongs the $5$ groups with $8$ elements, there are two which are noncommutative: the quaternion group $\{ 1, -1, \im{} , -\im{} , \jm{} , -\jm{} , \mathbf{k} , -\mathbf{k} \}$ with the rules of multiplication of the units given in the introduction or the dihedral group $D_8$. Since $\pm\im{}$, $\pm\jm{}$, $\pm \mathbf{k}$ are elements of order $4$ (or, with our terminology, $4$-involutions) and the group ${\boldsymbol \dagger}$ has only $2$ elements of order $4$ (equivalently, $4$-involutions), we must conclude that ${\boldsymbol \dagger}$ is isomorphic, as a group, to $D_8$.
	\end{proof}
	Define the map $\rho : {\boldsymbol \dagger} \ra D_8$ by
		\begin{multicols}{4}
		\begin{itemize}
		\item $\rho (\dagger_0) := \mathrm{Id}$.
		\item $\rho (\dagger_1) := x$.
		\item $\rho (\dagger_2) := a^2 x$.
		\item $\rho (\dagger_3) := a^2$.
		\item $\rho (\dagger_4 ) := a^3 x$.
		\item $\rho (\dagger_5 ) := ax$.
		\item $\rho (\ddagger_6) := a$.
		\item $\rho (\ddagger_7 ) := a^3$.
		\end{itemize}
		\end{multicols}
	Comparing the Cayley table of the group ${\boldsymbol \dagger}$ (see Table \ref{Tb:CayleyEigthFunctions}) and the group $D_8$ (see Table \ref{Tb:CayleyTableD8}), we see that $\rho$ is a group isomorphism. All the proper subgroups of $D_8$ are known. There are exactly nine of them: 
		\begin{itemize}
		\item $\{ \mathrm{Id}, x, a^2, a^2x\}$, $\{\mathrm{Id}, ax, a^2, a^3 x\}$, $\{ \mathrm{Id}, a, a^2, a^3 \}$ (subgroups with four elements).
		\item $\{ \mathrm{Id}, x \}$, $\{ \mathrm{Id}, a^2 x \}$, $\{ \mathrm{Id}, a^2\}$, $\{ \mathrm{Id} , a^3 x \}$, $\{ \mathrm{Id}, ax \}$ (subgroups with two elements).
		\item $\{ \mathrm{Id} \}$ (trivial subgroup).
		\end{itemize}
		Applying $\rho^{-1}$ to the above subgroups, we obtain the list of subgroups of ${\boldsymbol \dagger }$ enumerated above.
		
	The group structure of the conjugates and pseudoconjugates gives rise to surprising facts on the invertibility of a bicomplex number. If we multiply a bicomplex number with all of its possible conjugates and pseudoconjugates, we obtain
		\begin{align*}
		s^{\dagger_0} s^{\dagger_1} s^{\dagger_2} s^{\dagger_3} s^{\dagger_4} s^{\dagger_5} s^{\ddagger_6} s^{\ddagger_7} = |z_{\be{1}}|^4 |z_{\be{2}}|^4 \be{1} + |z_{\be{1}}|^4 |z_{\be{2}}|^4 \be{2} = |z_{\be{1}}|^4 |z_{\be{2}}|^4 .
		\end{align*}
	This is a real number. If we instead multiply all the conjugates from the subgroup $\{ \dagger_0 , \dagger_1 , \dagger_2 , \dagger_3 \}$ together, we obtain
		\begin{align*}
		s^{\dagger_0} s^{\dagger_1} s^{\dagger_2} s^{\dagger_3} = |z_{\be{1}}|^2 |z_{\be{2}}|^2 \be{1} + |z_{\be{1}}|^2 |z_{\be{2}}|^2 \be{2} = |z_{\be{1}}|^2 |z_{\be{2}}|^2 .
		\end{align*}
	Again, we obtain a real number.	Similarly, if we multiply the conjugates in the subgroups $\{ \dagger_0 , \dagger_3, \dagger_4 , \dagger_5 \}$ or $\{ \dagger_0 , \dagger_3 , \ddagger_6 , \ddagger_7 \}$ together, we obtain
		\begin{align*}
		s^{\dagger_0} s^{\dagger_3} s^{\dagger_4} s^{\dagger_5} = s^{\dagger_0} s^{\dagger_3} s^{\ddagger_6} s^{\ddagger_7} = |z_{\be{1}}|^2 |z_{\be{2}}|^2 .
		\end{align*}
	Again we obtain a real number. This leads to the following characterization of the invertible elements in $\mB \mC$.
		\begin{theorem}
		A number $s \in \mB \mC$ is invertible if and only if one of the following conditions holds:
			\begin{itemize}
			\item $s s^{\dagger_1} s^{\dagger_2} s^{\dagger_3} \neq 0$.
			\item $s s^{\dagger_3} s^{\dagger_4} s^{\dagger_5} \neq 0$.
			\item $s s^{\dagger_3} s^{\ddagger_6} s^{\ddagger_7} \neq 0$.
			\item $s s^{\dagger_1} s^{\dagger_2} s^{\dagger_3} s^{\dagger_4} s^{\dagger_5} s^{\ddagger_6} s^{\ddagger_7} \neq 0$.
			\end{itemize}
		If one of the above condition is satisfied, then the four conditions hold and the inverse is given by
			\begin{align*}
			s^{-1} = \frac{s^{\dagger_1} s^{\dagger_2} s^{\dagger_3}}{s s^{\dagger_1} s^{\dagger_2} s^{\dagger_3}} = \frac{s^{\dagger_3} s^{\dagger_4} s^{\dagger_5}}{s s^{\dagger_3} s^{\dagger_4} s^{\dagger_5}} = \frac{s^{\dagger_3} s^{\ddagger_6} s^{\ddagger_7}}{s s^{\dagger_3} s^{\ddagger_6} s^{\ddagger_7}} = \frac{s^{\dagger_1} s^{\dagger_2} s^{\dagger_3} s^{\dagger_4} s^{\dagger_5} s^{\ddagger_6} s^{\ddagger_7}}{s s^{\dagger_1} s^{\dagger_2} s^{\dagger_3} s^{\dagger_4} s^{\dagger_5} s^{\ddagger_6} s^{\ddagger_7}} .
			\end{align*}
		\end{theorem}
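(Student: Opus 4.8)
The plan is to carry everything over to the idempotent representation $s = z_{\be{1}} \be{1} + z_{\be{2}} \be{2}$ and to combine two facts: the componentwise product rule $st = (z_{\be{1}} w_{\be{1}}) \be{1} + (z_{\be{2}} w_{\be{2}}) \be{2}$, and Theorem \ref{T:InvertibilityBC}, which states that $s$ is invertible exactly when $z_{\be{1}} \neq 0$ and $z_{\be{2}} \neq 0$. Reading off the idempotent forms of the eight conjugations, each map sends the pair $(z_{\be{1}}, z_{\be{2}})$ to a pair whose entries are drawn from $\{ z_{\be{1}}, \overline{z}_{\be{1}}, z_{\be{2}}, \overline{z}_{\be{2}} \}$, either keeping the two slots in place (as for $\dagger_0, \dagger_3, \dagger_4, \dagger_5$) or interchanging them (as for $\dagger_1, \dagger_2, \ddagger_6, \ddagger_7$). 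Hence any of the displayed products is evaluated slot by slot, each slot being a monomial in these four quantities.

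First I would prove the forward implication. If $s$ is invertible then $z_{\be{1}}$ and $z_{\be{2}}$ are both nonzero, so each idempotent slot of each product is a nonzero monomial and every product is itself nonzero. For the three products $s s^{\dagger_1} s^{\dagger_2} s^{\dagger_3}$, $s s^{\dagger_3} s^{\ddagger_6} s^{\ddagger_7}$, and the product of all eight conjugations, the interchanging maps present force each slot to collect every factor together with its complex conjugate; both slots then reduce to the \emph{same} positive real number, and using $\be{1} + \be{2} = 1$ the product collapses to a real scalar of the form $|z_{\be{1}}|^{a} |z_{\be{2}}|^{b}$.

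For the reverse implication and the inverse formula I would argue on these real scalars directly. A scalar $|z_{\be{1}}|^{a} |z_{\be{2}}|^{b}$ is nonzero iff $z_{\be{1}} \neq 0$ and $z_{\be{2}} \neq 0$, i.e. iff $s$ is invertible; this gives both ``product $\neq 0 \Rightarrow$ invertible'' and, conversely, the fact that invertibility forces all the stated products to be nonzero simultaneously. The inverse formulas are then immediate: writing $P$ for any of the numerators, the corresponding denominator is exactly $sP$, which is invertible, so $P (sP)^{-1} = s^{-1}$, and each displayed fraction is this identity.

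The step I expect to be the main obstacle is the condition coming from the subgroup $\{ \dagger_0, \dagger_3, \dagger_4, \dagger_5 \}$, namely $s s^{\dagger_3} s^{\dagger_4} s^{\dagger_5} \neq 0$. None of $\dagger_0, \dagger_3, \dagger_4, \dagger_5$ interchanges the two idempotent slots, so this product never mixes $z_{\be{1}}$ with $z_{\be{2}}$; evaluating slotwise gives the hyperbolic number $|z_{\be{1}}|^{4} \be{1} + |z_{\be{2}}|^{4} \be{2}$ instead of a real scalar, and this vanishes only when $s = 0$. For instance $s = \be{1}$ gives $s s^{\dagger_3} s^{\dagger_4} s^{\dagger_5} = \be{1} \neq 0$ while $\be{1}$ is not invertible. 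The inverse formula through this subgroup still holds once $s$ is assumed invertible, since then the hyperbolic denominator is itself invertible and cancels; but the equivalence ``$s s^{\dagger_3} s^{\dagger_4} s^{\dagger_5} \neq 0 \iff s$ invertible'' does not hold as written, and the delicate point of a fully correct proof is to repair this condition — either by discarding it in favour of the genuinely mixing products or by strengthening its hypothesis to the nonvanishing of both idempotent components.
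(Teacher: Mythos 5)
Your argument follows the same route as the paper's: pass to the idempotent representation, use the componentwise product rule together with Theorem \ref{T:InvertibilityBC}, observe that the products collapse to expressions in $|z_{\be{1}}|$ and $|z_{\be{2}}|$, and read off both directions of the equivalence and the inverse formulas from that. For the first, third and fourth bullets your computations agree with the paper's and your proof of those parts is complete and correct.

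The point you flag at the end is the essential one, and you are right: it is an error in the paper's statement and proof, not a gap in your argument. Just before the theorem the paper asserts that $s^{\dagger_0}s^{\dagger_3}s^{\dagger_4}s^{\dagger_5} = |z_{\be{1}}|^2|z_{\be{2}}|^2$, but since each of $\dagger_0,\dagger_3,\dagger_4,\dagger_5$ acts within each idempotent slot (none of them interchanges $\be{1}$ and $\be{2}$), the slotwise product is
\begin{align*}
s\, s^{\dagger_3} s^{\dagger_4} s^{\dagger_5}
= \bigl(z_{\be{1}}\overline{z}_{\be{1}}\cdot \overline{z}_{\be{1}} z_{\be{1}}\bigr)\be{1}
+ \bigl(z_{\be{2}}\overline{z}_{\be{2}}\cdot z_{\be{2}}\overline{z}_{\be{2}}\bigr)\be{2}
= |z_{\be{1}}|^4 \be{1} + |z_{\be{2}}|^4 \be{2},
\end{align*}
a hyperbolic number that is real only when $|z_{\be{1}}| = |z_{\be{2}}|$. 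It vanishes if and only if $s = 0$, so the condition $s s^{\dagger_3} s^{\dagger_4} s^{\dagger_5} \neq 0$ is equivalent to $s \neq 0$ rather than to invertibility, and your counterexample $s = \be{1}$ (a zero divisor with $s s^{\dagger_3} s^{\dagger_4} s^{\dagger_5} = \be{1} \neq 0$) is valid. Your proposed repairs are the right ones: either discard that bullet, or replace the criterion by the nonvanishing of both idempotent components of the product; and, as you note, the corresponding inverse formula $s^{-1} = s^{\dagger_3}s^{\dagger_4}s^{\dagger_5}\big/\bigl(s s^{\dagger_3}s^{\dagger_4}s^{\dagger_5}\bigr)$ does remain valid once $s$ is assumed invertible, because the denominator $|z_{\be{1}}|^4\be{1} + |z_{\be{2}}|^4\be{2}$ is then itself invertible. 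The other three bullets, and their inverse formulas, stand exactly as you argued.
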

		\begin{proof}
		If $s$ is invertible, then $z_{\be{1}}$ and $z_{\be{2}}$ are non zero complex numbers by Theorem \ref{T:InvertibilityBC}. Therefore, $|z_{\be{1}}| \neq 0$ and $|z_{\be{2}}| \neq 0$. This implies that the desire quantities are non zero. If one of the three quantities are not zero, then this implies that $|z_{\be{1}}| |z_{\be{2}}| \neq 0$. We therefore have $|z_{\be{1}}| \neq 0$ and $|z_{\be{2}} | \neq 0$ and so $s$ is invertible.
		
		If one of the conditions is satisfied, then $|z_{\be{1}}| |z_{\be{2}}| \neq 0$ and therefore all the other expressions are also non zero. By multiplying $s$ by the expression proposed for $s^{-1}$, we obtain
			\begin{align*}
			s s^{-1} = \frac{|z_{\be{1}}|^m |z_{\be{2}}|^m}{|z_{\be{1}}|^m |z_{\be{2}}|^m} = 1
			\end{align*}
		where $m = 2$ or $m = 4$. This completes the proof.
		\end{proof}
	This last result gives interesting expressions of the inverse of a bicomplex number in terms of the product of its conjugates and/or pseudoconjugates. These expressions are similar to the usual expression of the inverse of a complex number in terms of its complex conjugate. Also, the last identities were discovered by Vallière and Rochon \cite{RochonValiere} for the tricomplex numbers in a very specific situation. What is suprising in our last result is that the same property can be extended to the other groups of conjugates and pseudoconjugates.
	
\section{Geometry of Involutions}\label{sec:GeomInterpretation}
We end this paper by stating geometric interpretations for each conjugate and pseudoconjugate. We seek a geometric description similar to the geometric meaning the complex conjugate has. To do so, we will use the representations of the conjugates and pseudoconjugates in terms of four real coefficients. We will now think of a bicomplex number $s = x_1 + x_{\im{1}} \im{1} + x_{\im{2}} \im{2} + x_{\jm{1}} \jm{1}$ as a quadruplet $\mathbf{s} = (x_{1} , x_{\im{1}} , x_{\im{2}} , x_{\jm{1}}) \in \mR^4$.

We introduce three reflections of a vector $\mathbf{s}$:
	\begin{itemize}
	\item $R_{\im{1}} (\mathbf{s}) = (x_1 , -x_{\im{1}} , x_{\im{2}} , x_{\jm{1}})$ (reflection about the plane $x_{\im{1}} = 0$).
	\item $R_{\im{2}} (\mathbf{s}) = (x_1 , x_{\im{1}} , -x_{\im{2}} , x_{\jm{1}})$ (reflection about the plane $x_{\im{2}} = 0$).
	\item $R_{\jm{1}} (\mathbf{s}) = (x_1 , x_{\im{1}} , x_{\im{2}} , -x_{\jm{1}})$ (reflection about the plane $x_{\jm{1}} = 0$).
	\end{itemize}
The vector form of the expression for the first, second, and third conjugates become
	\begin{itemize}
	\item[$\dagger_1$:] $(x_{1} , x_{\im{1}} , x_{\im{2}} , x_{\jm{1}}) \mapsto (x_1 , x_{\im{1}} , - x_{\im{2}} , -x_{\jm{1}} )$;
	\item[$\dagger_2$:] $(x_1 , x_{\im{1}} , x_{\im{2}} , x_{\jm{1}} ) \mapsto (x_1 , -x_{\im{1}} , x_{\im{2}} , -x_{\jm{1}} )$;
	\item[$\dagger_3$:]  $(x_1 , x_{\im{1}} , x_{\im{2}} , x_{\jm{1}}) \mapsto (x_1 , -x_{\im{1}} , -x_{\im{2}} , x_{\jm{1}} )$.
	\end{itemize}
We immediately see the first conjugate $\dagger_1$ is equal to the composition $R_{\im{2}} \circ R_{\jm{1}}$. Therefore, the conjugate $\dagger_1$ reflects a bicomplex number about the plane $x_{\jm{1}} = 0$ and then reflects it about the plane $x_{\im{2}} = 0$. We also see that the second conjugate $\dagger_2$ is equal to the composition $R_{\im{1}} \circ R_{\jm{1}}$. Therefore the conjugate $\dagger_2$ reflects a bicomplex number about the plane $x_{\jm{1}} = 0$ and then reflects it about the plane $x_{\im{1}} = 0$. Similarly, the third conjugate is equal to the composition $R_{\im{1}} \circ R_{\im{2}}$. Therefore the conjugate $\dagger_3$ reflects a bicomplex number about the plane $x_{\im{2}}$ and then reflects it about the plane $x_{\im{1}} = 0$.

The two remaining conjugates are different in nature than the first four because they interchange two imaginary units. We will start with the fourth conjugate. Let $\mathbf{a}$ be a vector perpenticular to a plane $a x_1 + b x_{\im{1}} + c x_{\im{2}} + d x_{\jm{1}} = 0$. For example, the vector $\mathbf{a}$ can be choosen to be $(a, b, c, d)$. Then the reflection about this plane is the linear transformation given by (see, for example, \cite[p.363]{jacobson1995})
	\begin{align*}
	R_{\mathbf{a}} (\mathbf{s}) := \mathbf{s} - \frac{\mathbf{s} \cdot \mathbf{a}}{\mathbf{a} \cdot \mathbf{a}} \mathbf{a} .
	\end{align*}
If $\mathbf{a_4} = (0, 1, 1, 0)$, which defines the plane $x_{\im{1}} + x_{\im{2}} = 0$, then a simple computation gives
	\begin{align*}
	R_{\mathbf{a_4}} (\mathbf{s}) = (x_1 , -x_{\im{2}} , -x_{\im{1}} , x_{\jm{1}})
	\end{align*}
which is exactly the expression of $\dagger_4$ in vector form. Therefore, the conjugate $\dagger_4$ reflects a bicomplex number about the plane $x_{\im{1}} + x_{\im{2}} = 0$. On the other hand, if we define $\mathbf{a_5} = (0, 1, -1, 0)$, which defines the plane $x_{\im{1}} - x_{\im{2}} = 0$, then a simple computation gives
	\begin{align*}
	R_{\mathbf{a_5}} (\mathbf{s}) = (x_1 , x_{\im{2}} , x_{\im{1}} , x_{\jm{1}})
	\end{align*}
which is exactly the expression of $\dagger_5$ in vector form. Therefore the conjugate $\dagger_5$ reflects a bicomplex number about the plane $x_{\im{1}} - x_{\im{2}} = 0$.

Finally, the vector form of the pseudoconjugate $\ddagger_6$ is
	\begin{align*}
	\mathbf{s} \mapsto (x_1 , -x_{\im{2}} , x_{\im{1}} , -x_{\jm{1}}) .
	\end{align*}
We see that $\ddagger_6$ is the composition $R_{\im{1}} \circ R_{\jm{1}} \circ R_{\mathbf{a_5}}$ or the composition $R_{\im{2}} \circ R_{\jm{1}} \circ R_{\mathbf{a_4}}$. Therefore, there are at least two geometric interpretations of the pseudoconjugate $\ddagger_6$:
	\begin{itemize}
	\item it reflects a bicomplex number about the plane $x_{\im{1}} - x_{\im{2}} = 0$, then reflects it about the plane $x_{\jm{1}} = 0$, and then reflects it about the plane $x_{\im{1}} =0$;
	\item it reflects a bicomplex number about the plane $x_{\im{1}} + x_{\im{2}} = 0$, then reflects it about the plane $x_{\jm{1}} = 0$, and then reflects it about the plane $x_{\im{2}} = 0$.
	\end{itemize}
In a similar way, the pseudoconjugate $\ddagger_7$ is the composition $R_{\im{1}} \circ R_{\jm{1}} \circ R_{\mathbf{a_4}}$ or the composition $R_{\im{2}} \circ R_{\jm{1}} \circ R_{\mathbf{a_5}}$. Therefore, there are also at least two geometric interpretations of the pseudoconjugate $\ddagger_7$:
	\begin{itemize}
	\item it reflects a bicomplex number about the plane $x_{\im{1}} + x_{\im{2}} = 0$, then reflects it about the plane $x_{\jm{1}} = 0$, and then reflects it about the plane $x_{\im{1}} = 0$.
	\item it reflects a bicomplex number about the plane $x_{\im{1}} - x_{\im{2}} = 0$, then reflects it about the plane $x_{\jm{1}}$, and then reflects it about the plane $x_{\im{2}} = 0$.
	\end{itemize}
	
\section*{Acknowledgements}
The author would like to thank Dominic Rochon for fruitful discussions on the problem. The author would also like to thank William Verrault for reading a first draft of this manuscript and pointing out some valuable suggestions.

\section*{Funding}
The author's research is supported by a CRSNG post-doctoral scholarship and is partially supported by an FRQNT postdoctoral scolarship.

\bibliographystyle{plain}
\bibliography{Biblio.bib}

\appendix

\newpage

\section{List of Tables}

\begin{table}[ht]
	\begin{tabular}{|c||c|c|c|c|c|}
	\hline
	$\circ$ & $\dagger_0$ & $\dagger_1$ & $\dagger_2$ & $\dagger_3$ \\\hline\hline
	$\dagger_0$ & $\dagger_0$ & $\dagger_1$ & $\dagger_2$ & $\dagger_3$ \\
	$\dagger_1$ & $\dagger_1$ & $\dagger_0$ & $\dagger_3$ & $\dagger_2$ \\ 
	$\dagger_2$ & $\dagger_2$ & $\dagger_3$ & $\dagger_0$ & $\dagger_1$ \\
	$\dagger_3$ & $\dagger_3$ & $\dagger_2$ & $\dagger_1$ & $\dagger_0$ \\\hline
	\end{tabular}
	\vspace*{8pt}
	\caption{Cayley Table of the first four conjugates}\label{Tb:CayleyTabFirstFourConj}
	\end{table}
	
	\newpage
	
	\begin{table}[ht]
		\begin{tabular}{|c||c|c|c|c|c|c|c|c|}
		\hline
	$\circ$ & $\dagger_0$ & $\dagger_1$ & $\dagger_2$ & $\dagger_3$ & $\dagger_4$ & $\dagger_5$ & $\ddagger_6$ & $\ddagger_7$ \\\hline\hline
	$\dagger_0$ & $\dagger_0$ & $\dagger_1$ & $\dagger_2$ & $\dagger_3$ & $\dagger_4$ & $\dagger_5$ & $\ddagger_6$ & $\ddagger_7$ \\
	$\dagger_1$ & $\dagger_1$ & $\dagger_0$ & $\dagger_3$ & $\dagger_2$ & $\ddagger_6$ & $\ddagger_7$ & $\dagger_4$ & $\dagger_5$ \\ 
	$\dagger_2$ & $\dagger_2$ & $\dagger_3$ & $\dagger_0$ & $\dagger_1$ & $\ddagger_7$ & $\ddagger_6$ & $\dagger_5$ & $\dagger_4$\\
	$\dagger_3$ & $\dagger_3$ & $\dagger_2$ & $\dagger_1$ & $\dagger_0$ & $\dagger_5$ & $\dagger_4$ & $\ddagger_7$ & $\ddagger_6$ \\
	$\dagger_4$ & $\dagger_4$ & $\ddagger_7$ & $\ddagger_6$ & $\dagger_5$ & $\dagger_0$ & $\dagger_3$ & $\dagger_2$ & $\dagger_1$ \\
	$\dagger_5$ & $\dagger_5$ & $\ddagger_6$ & $\ddagger_7$ & $\dagger_4$ & $\dagger_3$ & $\dagger_0$ & $\dagger_1$ & $\dagger_2$ \\
	$\ddagger_6$ & $\ddagger_6$ & $\dagger_5$ & $\dagger_4$ & $\ddagger_7$ & $\dagger_1$ & $\dagger_2$ & $\dagger_3$ & $\dagger_0$ \\
	$\ddagger_7$ & $\ddagger_7$ & $\dagger_4$ & $\dagger_5$ & $\ddagger_6$ & $\dagger_2$ & $\dagger_1$ & $\dagger_0$ & $\dagger_3$ \\\hline
		\end{tabular}
		\vspace*{8pt}
		\caption{Cayley table for the six conjugates and the two pseudo-conjugates}\label{Tb:CayleyEigthFunctions}
		\end{table}
		
	\newpage
	
		\begin{table}[ht]
		\begin{tabular}{|c||c|c|c|c|c|c|c|c|}
		 \hline $\circ$ & $\mathrm{Id}$ & $x$ & $a^2x$ & $a^2$ & $a^3 x$ & $ax$ & $a$ & $a^3$ \\\hline\hline
		$\mathrm{Id}$ & $\mathrm{Id}$ & $x$ & $a^2x$ & $a^2$ & $a^3 x$ & $ax$ & $a$ & $a^3$ \\\hline
	$x$ & $x$ & $\mathrm{Id}$ & $a^2$ & $a^2 x$ & $a$ & $a^3$ & $a^3 x$ & $ax$ \\\hline
		$a^2x$ & $a^2 x$ & $a^2$ & $\mathrm{Id}$ & $x$ & $a^3$ & $a$ & $ax$ & $a^3x$ \\\hline
		$a^2$ & $a^2$ & $a^2 x$ & $x$ & $\mathrm{Id}$ & $ax$ & $a^3 x$ & $a^3$ & $a$ \\\hline
		$a^3 x$ & $a^3 x$ & $a^3$ & $a$ & $ax$ & $\mathrm{Id}$ & $a^2$ & $a^2 x$ & $x$ \\\hline
		$ax$ & $ax$ & $a$ & $a^3$ & $a^3x$ & $a^2$ & $\mathrm{Id}$ & $x$ & $a^2x$ \\\hline
		$a$ & $a$ & $ax$ & $a^3 x$ & $a^3$ & $x$ & $a^2x$ & $a^2$ & $\mathrm{Id}$ \\\hline
		$a^3$ & $a^3$ & $a^3 x$ & $ax$ & $a$ & $a^2 x$ & $x$ & $\mathrm{Id}$ & $a^2$ \\\hline
		\end{tabular}
		\vspace*{8pt}
		\caption{Cayley table for $D_8$}\label{Tb:CayleyTableD8}
		\end{table}

\end{document}